\documentclass{amsart}

\usepackage[T1]{fontenc}
\usepackage[latin10]{inputenc}
\usepackage{amssymb, amsmath, amsthm}
\usepackage{hyperref, amsrefs}
\usepackage{enumerate}
\usepackage{verbatim}

\textwidth150mm
\textheight230mm
\topmargin-6mm
\oddsidemargin 1cm
\evensidemargin 1cm

\def\dd{\mathcal D}
\def\sd{\sum_{I \in \dd}}
\def\fd{\mathbf f}
\def\Fd{\mathbf F}
\def\gd{\mathbf g}
\def\Gd{\mathbf G}
\def\fb{\mathcal B}
\def\ud{\mathbf U}
\def\vd{\mathbf V}
\def\dw{\mathbf W}
\def\md{\mathbf M}

\def\ltr{L^2(\mathbb{R})}
\def\ltrp{L^2(\mathbb{R}^p)}
\def\ltw{L^2(W)}
\def\ltwi{L^2(W^{-1})}

\newtheorem{theorem}{Theorem}[section]

\newtheorem{lemma}[theorem]{Lemma}

\newtheorem{remark}[theorem]{Remark}

\title{Bounds for Calder\'{o}n-Zygmund operators with matrix \(A_2\) weights}

\author{Sandra Pott}
\address{Centre for Mathematical Sciences, University of Lund, P.O. Box 118, SE-221 00 Lund, Sweden}
\email{sandra@maths.lth.se}
\author{Andrei Stoica}
\address{Centre for Mathematical Sciences, University of Lund, P.O. Box 118, SE-221 00 Lund, Sweden}
\email{andrei@maths.lth.se}

\subjclass[2010]{42B20, 60G46, 46B09, 46B28, 26B25}
\keywords{Calder\'{o}n-Zygmund operator, matrix $A_2$ weights, weighted $L^2$ spaces, martingale transform, Bellman function, dyadic Haar shift, Carleson embedding theorem, Schur multiplier}

\begin{document}

\maketitle

\begin{abstract}
It is well-known that dyadic martingale transforms are a good model for Calder\'{o}n-Zygmund singular integral operators. In this paper we extend some results on weighted norm inequalities to vector-valued functions. We prove that, if \(W\) is an \(A_2\) matrix weight, then the weighted \(L^2\)-norm of a Calder\'{o}n-Zygmund operator with cancellation has the same dependence on the \(A_2\) characteristic of \(W\) as the weighted \(L^2\)-norm of an appropriate matrix martingale transform. Thus the question of the dependence of the norm of 
matrix-weighted Calder{\'o}n-Zygmund operators on the \(A_2\) characteristic of the weight is reduced to the case of dyadic martingales and paraproducts. We also show a slightly different proof for the special case of Calder\'{o}n-Zygmund operators with even kernel, where only scalar martingale transforms are required. We conclude the paper by proving a version of the matrix-weighted Carleson Embedding Theorem.

Our method uses a Bellman function technique introduced by S. Treil to obtain the right estimates for the norm of dyadic Haar shift operators. We then apply the representation theorem of T. Hyt\"{o}nen to extend the result to general Calder\'{o}n-Zygmund operators.

\end{abstract}

\section{Introduction}

In the 1970's, R.A. Hunt, B. Muckenhoupt and R.L. Wheeden \cite{HuMuWh73} and R.R. Coifman and C. Fefferman \cite{CoFe74} showed that a Calder\'{o}n-Zygmund singular integral operator is bounded on the weighted space \(L^p(w)\) if and only if the scalar weight \(w\) belongs to the so-called \(A_p\) class. For the last two decades, an important open problem in Harmonic Analysis was to characterize the dependence of the operator norm on the \(A_p\) characteristic, \([w]_{A_p}\), of the weight. For \(p=2\) this dependence was conjectured to be linear in \([w]_{A_2}\); the problem has become known as the \(A_2\) conjecture. The first step was taken by J. Wittwer \cite{Wi00}, who proved the \(A_2\) conjecture for dyadic martingale transforms. Using Bellman function techniques, S. Petermichl and A. Volberg \cite{PeVo02} showed the conjecture for the Beurling-Ahlfors transform. It took a few more years until the \(A_2\) conjecture was proved for the Hilbert transform by S. Petermichl (see \cite{Pe07}). The conjecture was finally settled for general Calder\'{o}n-Zygmund operators in 2010 by T. Hyt\"{o}nen \cite{Hy12a}. The main ingredient in his proof is the pointwise representation of a general Calder\'{o}n-Zygmund operator as a weighted average over an infinite number of randomized dyadic systems of some simpler operators (called dyadic Haar shifts) in such a way that the estimates for the dyadic Haar shifts depend polynomially on the complexity. 

A natural problem is to try to extend these results to vector-valued functions. S. Treil and A. Volberg introduced the correct definition of a matrix \(A_p\) weight (see \cite{TrVo97}). M. Goldberg \cite{Go03}, F. Nazarov and S. Treil \cite{hunt} and A. Volberg \cite{Vo97} showed that certain Calder\'{o}n-Zygmund operators are bounded on \(L^p(W)\) when \(1<p<\infty\) if \(W\) is a matrix \(A_p\) weight. However, the sharp dependence of the norm of a Calder\'{o}n-Zygmund operator on the \(A_2\) characteristic of \(W\) is unknown even for the martingale transform. In a recent paper, K. Bickel, S. Petermichl and B. Wick \cite{BiPeWi14} modified a scalar argument to obtain that for the Hilbert and martingale transforms this dependence is no worse than \([W]_{A_2}^{3/2}\log{[W]_{A_2}}\). This has very recently been improved to
 \([W]_{A_2}^{3/2}\), or more precisely,  \([W]_{A_2}^{1/2} [W]_{A_\infty}^{1/2} [W^{-1}]_{A_\infty}^{1/2}\), for all  Calder\'{o}n-Zygmund operators  \cite{ntvp}. 
 
 Even more recently, T. Hyt\"onen, S. Petermichl and A. Volberg \cite{hpv} proved
  the sharp linear upper bound 
 \([W]_{A_2} \) for the matrix-weighted square function, which can be understood as an average of the matrix martingale transforms
 we consider. This raises the hope that the expected sharp linear bound for matrix martingale transforms in terms of  $[W]_{A_2}$
 may now come into reach.
 
In this paper we prove that the norms of all Calder\'{o}n-Zygmund singular integrals with cancellation have the same dependence on \([W]_{A_2}\) as the matrix martingale transforms 
(we denote this dependence by \(N([W]_{A_2})\)). The \(A_2\) conjecture for matrix-weighted spaces is thus reduced to the case of dyadic martingale transforms and of the paraproducts. The proof follows S. Treil's approach for the proof of the linear \(A_2\) bound in the scalar case (see \cite{Tr11}). The main challenge here is the adaptation of the Bellman function to the matrix case, where convexity properties are much more difficult than in the scalar setting. Using Hyt\"{o}nen's representation of a Calder\'{o}n-Zygmund operator, it is enough to obtain the right estimate for the dyadic Haar shift operators. Since we want to obtain the same bound in terms of \([W]_{A_2}\) for the norm of dyadic Haar shifts, we have to use the martingale transform only once. We will decompose a dyadic Haar shift of complexity \(k\) into \(k\) ``slices'' that can be seen as martingale transforms. The main idea is to linearize the norm of these slices and then use the Bellman function to estimate each summand. In order to do this, we start with a standard dyadic martingale of points from the domain of the Bellman function, where at each point we have two choices with equal probability. We will then modify the martingale, but preserving the initial point and the endpoints, and probabilities. From the starting point, instead of going to the next level in the standard martingale, we move with probabilities \(1/2\) to two new points that are ``far enough'' from the initial point, but also ``almost averages'' of the endpoints. We can still move from these new points to the endpoints, this time using a modified dyadic martingale, where at each point we have two choices with ``almost equal'' probability. This new martingale is constructed in such a way that the probabilities of moving from the starting point to the endpoints are still equal, as in the case of the standard martingale. Although we have used probabilistic terms, the formal proof involving the Bellman function is elementary.

The paper is organized as follows: in Section 2 we recall the necessary definitions and results that we are using. Then we state our main result (Theorem \ref{mainthm}) and show that it is enough to obtain a corresponding estimate for dyadic Haar shift operators, which is the content of Theorem \ref{mainshift}. In Section 3 we use the boundedness of the martingale transform to relate the norm of a dyadic Haar shift to an expression that will be controlled by the Bellman function. Section 4 contains the definition of the Bellman function associated to our problem and the description of its properties. In Section 5 we formulate and prove the main technical result of the paper, which is inspired by \cite{Tr11}. In Section 6 we show how the main estimate from the previous section is used to conclude the proof of Theorem \ref{mainshift}. In the following section we prove a similar result for Calder\'{o}n-Zygmund singular integrals with even kernel, this time using the same martingale transform as in the scalar case. We finish with a further application of our Bellman function argument, namely a matrix-weighted Carleson Embedding Theorem which holds with constants independent of the dimension and the weight. This is, however, not the simple generalization of the usual weighted Carleson Embedding Theorem in \cite{NaTrVo99}.

\section{Definitions and statement of the main results}

In this section, we recall some well-known notions and results that we are going to use later on.

\subsection{Calder\'{o}n-Zygmund operators}

Let \(\Delta=\{(x,x): x \in \mathbb{R}^p\}\) be the diagonal of \(\mathbb{R}^p \times \mathbb{R}^p\). We say that a function \(K: \mathbb{R}^p \times \mathbb{R}^p \setminus \Delta \to \mathbb{C}\) is a standard Calder\'{o}n-Zygmund kernel if there exists \(\delta>0\) such that
\[|K(x,y)| \leq \frac{C}{|x-y|^p},\]
\[|K(x,y)-K(x,z)|+|K(y,x)-K(z,x)| \leq C_{\delta} \frac{|y-z|^{\delta}}{|x-y|^{p+\delta}},\]
for all \(x,y,z \in \mathbb{R}^p\) with \(|x-y|>2|y-z|\).

An operator \(T\), defined on the class of step functions (which is dense in \(L^2(\mathbb{R}^p)\)), is called a Calder\'{o}n-Zygmund operator on \(\mathbb{R}^p\) associated to \(K\), if it satisfies the kernel representation
\[Tf(x)=\int_{\mathbb{R}^p} K(x,y)f(y)\, \mathrm{d}y, \qquad x \notin {\mathrm{supp}}\,f .\]

\subsection{Matrix \(A_2\) weights}

For \(d \geq 1\), the non-weighted Lebesgue space \(\ltrp\) consists of all measurable functions \(f:\mathbb{R}^p \to \mathbb{C}^d\) such that
\[
\|f\|_{\ltrp} := \Big( \int_{\mathbb{R}^p}\,\|f(t)\|^2_{\mathbb{C}^d}\, \mathrm{d}t \Big)^{1/2} < \infty.
\] 
We will also use the space \(C^1_c(\mathbb{R}^p)\) of compactly supported, continuously differentiable functions \(f:\mathbb{R}^p \to \mathbb{C}^d\).

Let \(\mathcal{M}_d(\mathbb{C})\) be the space of \(d \times d\) complex matrices. A matrix weight on \(\mathbb{R}^p\) is a measurable locally integrable function \(W: \mathbb{R}^p \to \mathcal{M}_d(\mathbb{C}) \) whose values are almost everywhere positive definite. We define \(\ltw\) to be the space of measurable functions \(f:\mathbb{R}^p \to \mathbb{C}^d\) with norm 
\[\|f\|^2_{\ltw} = \int_{\mathbb{R}^p} \|W^{1/2}(t)f(t)\|^2_{\mathbb{C}^d}\, \mathrm{d}t = \int_{\mathbb{R}^p} \langle W(t)f(t), f(t) \rangle \, \mathrm{d}t < \infty.\]
It is well-known that the dual of \(\ltw\) can be identified with  \(\ltwi\), where the duality between these two spaces is given by the unweighted standard inner product.

We say that a matrix weight \(W\) satisfies the matrix \(A_2\) Muckenhoupt condition if 
\begin{equation}\label{muckenhoupt}
[W]_{A_2} := \sup_{Q} \bigg \| \Big(\frac{1}{|Q|} \int_{Q} W(t)\, \mathrm{d}t \Big)^{1/2} \Big(\frac{1}{|Q|} \int_{Q} W^{-1}(t)\, \mathrm{d}t \Big)^{1/2} \bigg \| < \infty,
\end{equation}
where the supremum is taken over all cubes \(Q \subset \mathbb{R}^p\), and \(\|\cdot\|\) denotes the norm of the matrix acting on \(\mathbb{C}^d\). 
The number \([W]_{A_2}\) is called the \(A_2\) characteristic of the weight \(W\). We say that a matrix weight \(W\) satisfies the dyadic matrix Muckenhoupt condition \(A_2^d\) on \(\mathbb{R}^p\) or \(\mathbb{R}\), if \eqref{muckenhoupt} is satisfied, but with the supremum now being taken only over dyadic cubes or intervals, respectively  (see \cite{TrVo97}).

\subsection{Dyadic setting}

Since we will reduce the proof of our main result to the case of functions defined on \(\mathbb{R}\), we will only introduce the required notions in this setting. For the analogous definitions in the case of functions on \(\mathbb{R}^p\), we refer the readers to \cite{Hy11}.

The standard dyadic system in \(\mathbb{R}\) is
\[\dd^0:=\bigcup_{j \in \mathbb{Z}} \dd^0_j, \qquad \dd^0_j:=\{2^{-j}([0,1)+k): k \in \mathbb{Z}\}.\]
Given a binary sequence \(\omega=(\omega_j)_{j \in \mathbb{Z}} \in (\{0,1\})^{\mathbb{Z}}\), a general dyadic system on \(\mathbb{R}\) is defined by
\[\dd^{\omega}:=\bigcup_{j \in \mathbb{Z}} \dd^{\omega}_j, \qquad \dd^{\omega}_j:= \dd^0_j + \sum_{i>j} 2^{-i} \omega_i.\]
When the particular choice of \(\omega\) is not important, we will use the notation \(\dd\) for a generic dyadic system. We equip the set \(\Omega:=(\{0,1\})^{\mathbb{Z}}\) with the canonical product probability measure \(\mathbb{P}_{\Omega}\) which makes the coordinates \(\omega_j\) independent and identically distributed with \(\mathbb{P}_{\Omega}(\omega_j=0)=\mathbb{P}_{\Omega}(\omega_j=1)=1/2\). We denote by \(\mathbb{E}_{\Omega}\) the expectation over the random variables \(\omega_j, j \in \mathbb{Z}\).

For an interval \(I \in \dd\), let \(I^+\) and \(I^-\) be the left and right children of \(I\). The parent of \(I\) will be denoted by \(\tilde{I}\). We will also use the notation \[\dd_n(I):=\{J \in \dd: J \subset I, |J|=2^{-n}|I|\}\]
for the collection of \(n\)-th generation children of \(I\), where \(|J|\) stands for the length of the interval \(J\).

For any interval \(I \in \dd\), there is an associated Haar function defined by
\[h_I=|I|^{-1/2}(\chi_{I^+}-\chi_{I^-}),\]
where \(\chi_I\) is the characteristic function of \(I\).

For an arbitrary dyadic system \(\dd\), the Haar functions form an orthogonal basis of \(\ltr\). Hence any function \(f \in \ltr\) admits the orthogonal expansion
\[f=\sd \langle f,h_I \rangle h_I.\]
We denote the average of a locally integrable function \(f\) on the interval \(I\) by \(\langle f \rangle_I :=|I|^{-1}\int_I f(t)\, \mathrm{d}t\).

Let \(W\) be a matrix weight. For a sequence of \(d \times d\) matrices \(\sigma=\{\sigma_I\}_{I \in \dd}\), we introduce the notation \( \|\sigma\|_{\infty, W} = \sup_{I \in \dd} \big \| \langle W \rangle_I ^{1/2} \sigma_I \langle W \rangle_I ^{-1/2} \big \| \).

For a sequence \(\sigma\) such that \( \|\sigma\|_{\infty, W} < \infty\), we define the martingale transform operator \(T_{\sigma}\) by
\[T_\sigma f = \sd \sigma_I  \langle f,h_I \rangle h_I.\]
If $W$ is a matrix $A_2$ weight, then the condition \( \|\sigma\|_{\infty, W} < \infty\) is equivalent to the boundedness of \(T_{\sigma}\) on \(\ltw\) (see, e.g.
 Theorem 5.2 in \cite{BiPeWi14} for an explicit statement; it is also contained in \cite{TrVo97}). Such martingale transforms are considered a good model for Calder\'{o}n-Zygmund singular integral operators. 

A (cancellative) dyadic Haar shift on \(\mathbb{R}\) of parameters \((m,n)\), with \(m,n \in \mathbb{N}_0\), is an operator of the form
\[S f = \sum_{L \in \dd}
\sum_{\substack{
            I \in \mathcal{D}_m(L) \\
            J \in \mathcal{D}_n(L)}}
c^{L}_{I,J} \langle f, h_I \rangle h_J,\]
where \(\left |c^{L}_{I,J}\right| \leq \frac{\sqrt{|I|} \sqrt{|J|}}{|L|} =2^{-(m+n)/2} \) and \(f\) is any locally integrable function. The number \(k := \max\{m,n\}+1\) is called the complexity of the Haar shift.

For \(0 \leq j \leq k-1\) we introduce the notation \(\mathcal{L} _j :=  \{ I \in \mathcal{D} : |I|=2^{j+kt},  t \in \mathbb{Z} \},\) and define the slice \(S_j\) by \[S_j f = \sum_{L \in \mathcal{L}_j}
\sum_{\substack{
            I \in \mathcal{D}_m(L) \\
            J \in \mathcal{D}_n(L)}}
c^{L}_{I,J} \langle f, h_I \rangle h_J. \]
We can thus decompose \(S\) as \(S=\sum _{j=0} ^{k-1} S_j\). The key point is now that the operators \(S_j\) can be seen as martingale transforms when we are moving \(k\) units of time at once, so it is possible to apply the Bellman function for dyadic martingale transforms.

Following the approach in \cite{Tr11}, one can show that it is enough to consider only dyadic Haar shifts on a dyadic system in \(\mathbb{R}\). The following construction works for general dyadic systems, but for convenience we will assume that we are dealing with the standard one. This reduction is obtained by ``arranging'' the dyadic cubes on the real line. 

More precisely, for a dyadic cube \(Q\) in \(\mathbb{R}^p\), we choose a dyadic interval \(I\) such that \(|I| = |Q|\) (this interval \(I\) will correspond to the cube \(Q\)). We then split \(Q\) into two congruent parallelepipeds by dividing one of its sides into two parts, and then pick a bijection between these two parallelepipeds and the children of \(I\). By dividing a long side, we split each parallelepiped into two congruent ones, and then choose a bijection between the four parallelepipeds and the children of the two intervals from the previous step. After \(p\) divisions we obtain a bijection between the children of \(Q\) and the intervals \(J \in \dd_p(I)\). The intervals \(J \in \dd_n(I), 1 \leq n < p\), correspond to some ``almost children'' \(R\) of \(Q\), where by an ``almost child'' of \(Q\) we mean a parallelepiped with some sides coinciding with the sides of \(Q\), and the other sides being half of the corresponding sides of \(Q\).

This construction can also be done in the opposite direction. If \(\tilde{I}\) is the parent of the interval \(I\), and \(\tilde{Q}\) is the grandparent of \(Q\) of order \(p\), by the above method we obtain a bijection \(\Phi\) between the children and ``almost children'' of \(\tilde{Q}\), and the intervals \(J \in \dd_n(\tilde{I}), 1 \leq n \leq p\), such that \(\Phi(Q) = I\). To make sure that \(\Phi(Q) = I\), at each division we have to assign to the "almost child" containing \(Q\) the dyadic interval of appropriate length that contains \(I\). 

A locally integrable function \(f\) on \(\mathbb{R}^p\) will thus be transferred to a locally integrable function \(g\) on \(\mathbb{R}\) such that \(\langle f \rangle_Q = \langle g \rangle_I\), for all \(Q\) and \(I\) with \(\Phi(Q) = I\).

We now look at the differences that arise when using this reduction. If \(S\) is a dyadic Haar shift (or one of its slices) of complexity \(k\) in \(\mathbb{R}^p\), then its model in \(\mathbb{R}\) will be a Haar shift of complexity \(kp\).

If \(W\) is a matrix \(A_2^d\) weight on \(\mathbb{R}^p\), then the \(A_2^d\) characteristic of the corresponding weight on \(\mathbb{R}\) is \(\sup_{R} \big \| \langle W \rangle_R^{1/2} \langle W^{-1} \rangle_R ^{1/2} \big \|\), where the supremum is taken over all dyadic cubes in \(\mathbb{R}^p\) and all their ``almost children''. If \(R\) is an ``almost child'' of a cube \(Q\), then
\[\int_{R} W(t)\, \mathrm{d}t \leq \int_{Q} W(t)\, \mathrm{d}t, \quad \int_{R} W^{-1}(t)\, \mathrm{d}t \leq \int_{Q} W^{-1}(t)\, \mathrm{d}t,\]
and \(|R| \geq 2^{-p+1} |Q|\). We thus have 
\begin{align*}
\big \| \langle W \rangle_R^{1/2} \langle W^{-1} \rangle_R^{1/2} \big \|^2 & = \big \| \langle W \rangle_R^{1/2} \langle W^{-1} \rangle_R \langle W \rangle_R^{1/2} \big \| \leq \big \| \langle W \rangle_R^{1/2} 2^{p-1} \langle W^{-1} \rangle_Q \langle W \rangle_R^{1/2} \big \| \\
& = 2^{p-1} \big \| \langle W^{-1} \rangle_Q^{1/2} \langle W \rangle_R \langle W^{-1} \rangle_Q^{1/2} \big \| \leq 2^{p-1}  \big \| \langle W^{-1} \rangle_Q^{1/2} 2^{p-1} \langle W \rangle_Q \langle W^{-1} \rangle_Q^{1/2} \big \| \\
& = 2^{2(p-1)} \big \| \langle W \rangle_Q^{1/2} \langle W^{-1} \rangle_Q^{1/2} \big \|^2.
\end{align*}
Thus, after the transfer to the real line, the \(A_2^d\) characteristic \([W]_{A_2^d}\) of the weight increases at most by a factor of \(2^{2(p-1)}\).
 
We are using the following representation of a Calder\'{o}n-Zygmund operator in terms of dyadic Haar shifts.

\begin{theorem} [\protect{Hyt\"{o}nen \cite{Hy11}}] \label{repr}
Let \(T\) be a Calder\'{o}n-Zygmund operator on \(\mathbb{R}^p\) which satisfies the standard kernel estimates, the weak boundedness property \(|\langle T \chi_Q, \chi_Q\rangle | \leq C|Q|\) for all cubes \(Q\), and the vanishing paraproduct conditions  \(T(1)=T^*(1)=0\). Then it has an expansion, say for \(f,g \in C^1_c(\mathbb{R}^p)\),
\[\langle T f,g \rangle_{L^2(\mathbb{R}^p),L^2(\mathbb{R}^p)} = 
C \cdot \mathbb{E}_{\Omega} \sum_{m,n=0}^{\infty} \tau(m,n) \langle S^{mn}_{\omega} f, g \rangle_{L^2(\mathbb{R}^p),L^2(\mathbb{R}^p)},\]
where \(C\) is a constant depending only on the constants in the standard estimates of the kernel \(K\) and the weak boundedness property, \(S^{mn}_{\omega}\) is a dyadic Haar shift in \(\mathbb{R}^p\) of parameters \((m,n)\) on the dyadic system \(\mathcal{D}^{\omega}\), and \(\tau(m,n) \lesssim P(\max\{m,n\}) 2^{-\delta \max\{m,n\}} \), with \(P\) a polynomial.
\end{theorem}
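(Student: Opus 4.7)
The plan is to follow the Nazarov--Treil--Volberg good/bad cube decomposition, refined by Hyt\"onen so as to track polynomial decay in the shift complexities. Fix a random dyadic system $\dd^{\omega}$ on $\mathbb{R}^p$ parameterized by $\omega \in \Omega$ and expand $f,g \in C^1_c(\mathbb{R}^p)$ in its Haar basis. Since $\langle Tf, g\rangle$ does not depend on $\omega$, one may freely insert $\mathbb{E}_{\Omega}$ and write
\[\langle Tf, g\rangle = \mathbb{E}_{\Omega} \sum_{I, J \in \dd^{\omega}} \langle f, h_I\rangle\, \langle g, h_J\rangle\, \langle T h_I, h_J\rangle,\]
with the understanding that each cube in $\mathbb{R}^p$ carries $2^p-1$ Haar functions (suppressed in notation). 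The task is to regroup this double sum into dyadic Haar shifts of parameters $(m,n)$ with coefficients decaying in $\max\{m,n\}$.

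Introduce good and bad cubes: with $\gamma = \delta/(2(p+\delta))$ and a large threshold $r$, declare $I$ bad if there exists some $J \in \dd^{\omega}$ with $|J| \geq 2^r|I|$ such that $I$ lies within distance $|I|^{\gamma}|J|^{1-\gamma}$ of $\partial J$. The random translation structure of $\dd^{\omega}$ makes badness have small probability, crucially independent of the local randomness that determines in which cube a given point sits. A standard duality/absorption argument (bounding the bad-cube contribution by $\varepsilon \|T\|_{L^2 \to L^2}\|f\|_2 \|g\|_2$ with $\varepsilon < 1$ and reabsorbing) reduces the problem to summing over pairs of good cubes only.

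For each good pair with $|I| \leq |J|$ and $m = \log_2(|J|/|I|)$, split into a separated case, $\mathrm{dist}(I, J) \geq |I|^{\gamma}|J|^{1-\gamma}$, and a nested case. In the separated case, the H\"older continuity of $K$ together with $\int h_I = 0$ yields the H\"ormander-type estimate
\[|\langle T h_I, h_J\rangle| \lesssim \frac{|I|^{1/2}|J|^{1/2}}{|L|} \cdot 2^{-\delta m/2},\]
where $L$ is the smallest common dyadic ancestor. In the nested case $I \subset J$, goodness of $I$ forces it to sit deep inside a child of $J$; one then writes $h_J$ as a constant on that child plus a controlled remainder and uses the weak boundedness property together with the vanishing paraproduct hypotheses $T(1) = T^*(1) = 0$ to eliminate the paraproduct pieces, leaving a contribution of the same normalized form. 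The reversed case $|J| < |I|$ is handled symmetrically, producing a factor $2^{-\delta n/2}$.

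Grouping all pairs with common ancestor $L$ and fixed $(m,n)$ assembles precisely a dyadic Haar shift $S^{mn}_{\omega}$ of parameters $(m,n)$ with normalized coefficients $|c^L_{I,J}| \leq |I|^{1/2}|J|^{1/2}/|L| = 2^{-(m+n)/2}$, and the prefactor has size $P(\max\{m,n\})\, 2^{-\delta \max\{m,n\}}$, the polynomial $P$ arising from counting admissible geometric configurations. The main obstacle is the bookkeeping in the nested case: one must carefully isolate the genuine Haar shift piece from the paraproduct remainder and check that the cancellation $T(1)=T^*(1)=0$ makes the latter disappear \emph{exactly}. A secondary but essential difficulty is to carry out the good/bad absorption argument so that the resulting constant $C$ depends only on the kernel constants and the weak boundedness constant, as claimed.
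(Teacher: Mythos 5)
The paper does not prove this statement: Theorem~\ref{repr} is quoted verbatim as Hyt\"onen's dyadic representation theorem and credited to \cite{Hy11} (see also \cite{Hy12a}), so there is no in-paper proof to compare against. I will therefore assess your outline on its own terms.

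Your sketch correctly reproduces the standard architecture of Hyt\"onen's argument: averaging over random dyadic systems to insert $\mathbb{E}_\Omega$, the Nazarov--Treil--Volberg notion of good and bad cubes with the exponent $\gamma=\delta/(2(p+\delta))$, absorption of the bad-cube contribution, the split of good pairs into separated and contained configurations, the H\"ormander estimate giving the $2^{-\delta\,(\cdot)}$ decay in the separated case, the use of weak boundedness plus $T(1)=T^*(1)=0$ to kill the paraproduct pieces in the nested case, and finally the regrouping by common ancestor $L$ and scale gap $(m,n)$ into cancellative shifts with normalized coefficients $|c^L_{I,J}|\le 2^{-(m+n)/2}$ and a prefactor $P(\max\{m,n\})\,2^{-\delta\max\{m,n\}}$. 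This is the right route and the pieces are in the right order.

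Two places where the sketch as written would not yet survive scrutiny. First, the absorption step is circular as stated: you bound the bad part by $\varepsilon\|T\|_{L^2\to L^2}\|f\|_2\|g\|_2$, which presupposes the very finiteness of $\|T\|$ that the representation is ultimately used to establish. Hyt\"onen resolves this by running the argument on a priori bounded truncations (or with an a priori qualitative boundedness assumption that is removed afterwards), and one must also verify that the goodness event for a given cube is independent of that cube's position --- the independence is what lets you condition and factor out $\varepsilon$ uniformly. Second, the nested case needs a cleaner dichotomy than ``deeply inside a child versus not'': for a good cube $I\subset J$ with $\ell(I)\le 2^{-r}\ell(J)$, goodness forces $I$ to avoid a $\ell(I)^\gamma\ell(J)^{1-\gamma}$-neighbourhood of $\partial J'$ for \emph{every} ancestor $J'$ up to scale $2^r\ell(I)$, and it is precisely this that lets one replace $h_J$ on the child containing $I$ by its constant value plus an error controlled by the kernel smoothness; without spelling out that the constant piece is exactly what the $T(1)=T^*(1)=0$ hypotheses annihilate, the claim that the paraproducts ``disappear exactly'' is an assertion rather than an argument. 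With those two points repaired, the outline matches the proof in \cite{Hy11}.
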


We define the function \(N:[1, \infty) \to [1, \infty)\) by
\[ N(X)=  \sup \|T_{\sigma}\|_{\ltw \to \ltw}, \]
where the supremum is taken over all \( d \times d\) matrix \(A_2^d\) weights \(W\) with \([W]_{A_2^d} \leq X\) and all matrix sequences $\{\sigma\}_{I \in \mathcal{D}}$ with
\(\|\sigma\|_{\infty, W} \leq 1\). It was shown in \cite{BiPeWi14} that
\begin{equation} \label{cubicbound}
N(X) \lesssim (\log X ) X^{3/2}.
\end{equation}

Here is our main result:

\begin{theorem}\label{mainthm}
Let \(W\) be a \(d \times d\) matrix \(A_2\) weight on \(\mathbb{R}^p\). 
Let \(K\) be a standard kernel and \(T\) be a Calder\'{o}n-Zygmund operator on \(\mathbb{R}^p\) associated to \(K\). Suppose that \(T\) satisfies the weak boundedness property \(|\langle T \chi_Q, \chi_Q \rangle | \leq C|Q|\) for all cubes \(Q\), and the vanishing paraproduct conditions \(T(1)=T^*(1)=0\). Then
\[\|T\|_{\ltw \to \ltw} \leq C \cdot p d N(2^{2(p-1)} [W]_{A_2}) \leq C_p \cdot d N([W]_{A_2}),\]
where \(C\) depends only on the constants in the standard estimates and the weak boundedness property, while \(C_p\) depends on \(C\) and \(p\).
\end{theorem}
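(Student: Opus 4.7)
The plan is to derive Theorem \ref{mainthm} from Hyt\"{o}nen's representation (Theorem \ref{repr}) combined with a quantitative estimate for individual dyadic Haar shifts (Theorem \ref{mainshift}), after first reducing the $\mathbb{R}^p$ setting to the real-line setting via the transfer construction explained in Section~2.

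First, I would apply Theorem \ref{repr} to the bilinear form $\langle Tf, g\rangle$ for $f, g \in C^1_c(\mathbb{R}^p)$. Since $C^1_c(\mathbb{R}^p)$ is dense in both $\ltw$ and $\ltwi$ and the two spaces are dual via the unweighted pairing, taking the supremum in $g$ and applying the triangle inequality gives
\[
\|T\|_{\ltw \to \ltw} \leq C \cdot \mathbb{E}_{\Omega} \sum_{m,n=0}^\infty \tau(m,n) \, \|S^{mn}_\omega\|_{\ltw \to \ltw}.
\]
Thus it suffices to estimate each $\|S^{mn}_\omega\|_{\ltw \to \ltw}$ in a way that can be summed against the rapidly decaying weights $\tau(m,n)$.

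Next, I would apply the transfer construction from the end of Section~2 to reduce each Haar shift $S^{mn}_\omega$ on $\mathbb{R}^p$ to a Haar shift on $\mathbb{R}$ of complexity $kp$, where $k = \max\{m,n\}+1$, acting on a weight whose $A_2^d$ characteristic is at most $2^{2(p-1)}[W]_{A_2}$. Theorem \ref{mainshift} (whose proof occupies the bulk of the paper, Sections~3--6) is expected to supply the key one-dimensional bound: for a Haar shift $S$ of complexity $k$ on $\mathbb{R}$,
\[
\|S\|_{\ltw \to \ltw} \leq C \, d \, k \, N([W]_{A_2^d}),
\]
the linear factor $k$ arising from the slice decomposition $S = \sum_{j=0}^{k-1} S_j$ in which each $S_j$ is handled as a matrix martingale transform through the Bellman function technique. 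Combining the two steps yields
\[
\|S^{mn}_\omega\|_{\ltw \to \ltw} \leq C \, d \, kp \, N\bigl(2^{2(p-1)}[W]_{A_2}\bigr).
\]

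Finally, using the bound $\tau(m,n) \lesssim P(\max\{m,n\}) \, 2^{-\delta \max\{m,n\}}$, the series $\sum_{m,n \geq 0} \tau(m,n)(\max\{m,n\}+1)$ converges to an absolute constant, which can be absorbed into $C$, producing $\|T\|_{\ltw \to \ltw} \leq C \, p d \, N(2^{2(p-1)}[W]_{A_2})$. The second inequality $\leq C_p d \, N([W]_{A_2})$ then follows from the polynomial growth of $N$ recorded in \eqref{cubicbound}, which allows the dilation factor $2^{2(p-1)}$ inside the argument of $N$ to be absorbed at the cost of a $p$-dependent constant. The main obstacle in this outline is clearly Theorem \ref{mainshift}: extracting a complexity-linear bound for matrix-weighted dyadic Haar shifts. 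This is where the matrix Bellman function must be constructed and the delicate probabilistic argument sketched in the introduction carried out, and it is the genuinely new content of the paper; the reduction above, by contrast, is a routine packaging of Hyt\"{o}nen's representation, the $\mathbb{R}^p \to \mathbb{R}$ transfer, and summability of $\tau(m,n)$.
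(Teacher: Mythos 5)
Your proposal is correct and follows essentially the same route as the paper: Hyt\"{o}nen's representation theorem applied to the bilinear form on the dense class $C^1_c(\mathbb{R}^p)$, the transfer of Haar shifts and weights from $\mathbb{R}^p$ to $\mathbb{R}$ at the cost of a factor $p$ in the complexity and $2^{2(p-1)}$ in the $A_2$ characteristic, the linear-in-complexity bound from Theorem \ref{mainshift}, and summability of the coefficients $\tau(m,n)$ against the $(\max\{m,n\}+1)$ factor. You also correctly identify that the second inequality is disposed of using the polynomial a priori bound \eqref{cubicbound} on $N$, and that the real content of the paper is Theorem \ref{mainshift}.
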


The second inequality in the theorem is a simple consequence of (\ref{cubicbound}),
we therefore turn to the first inequality.  It is enough to show a corresponding result for Haar shift operators and then use the representation theorem of T. Hyt\"{o}nen. 

Let \(f,g \in C_c^1(\mathbb{R}^p)\)  (if \(W\) is a matrix \(A_2\) weight, this space is dense in both \(\ltw\) and \(\ltwi\)). Since the duality between \(\ltw\) and \(\ltwi\) is the same as the standard duality on \(\ltrp\),
by Theorem \ref{repr} we have the representation
\[\langle Tf, g \rangle_{\ltw,\ltwi} = C \cdot \mathbb{E}_{\Omega} \sum_{m,n=0}^{\infty} \tau(m,n) \langle S^{mn}_{\omega} f, g \rangle_{\ltw,\ltwi}\]
and therefore
\[ \|T\|_{\ltw \to \ltw} \leq C  \sum_{m,n=0}^{\infty} \tau(m,n) \|S^{mn}_{\omega}\|_{\ltw \to \ltw}.\]
We will show the estimate 
\[
    \|S^{mn}\|_{\ltw \to \ltw} \lesssim (\max\{m,n\}+1) p d N(2^{2(p-1)} [W]_{A_2})
    \]
     for all dyadic Haar shifts \(S^{mn}\) on \(\mathbb{R}^p\) with parameters \((m,n)\), which ensures the convergence of the series and completes the proof of Theorem \ref{mainthm}. Using the above transference result, we can restrict ourselves to Haar shifts in \(\mathbb{R}\). This is the content of the following theorem.

\begin{theorem}\label{mainshift}
Let \(S\) be a dyadic Haar shift on \(\mathbb{R}\) of complexity \(k \geq 1\) and \(W\) be a matrix \(A_2^d\) weight. Then
\[\|S\|_{\ltw \to \ltw} \leq c \cdot k d N([W]_{A_2^d}),\]
where c is an absolute, positive constant.
\end{theorem}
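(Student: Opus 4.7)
The plan is to decompose $S = \sum_{j=0}^{k-1} S_j$ into its $k$ scale-separated slices and prove the uniform bound $\|S_j\|_{\ltw \to \ltw} \lesssim d\,N([W]_{A_2^d})$ for each $j$; the triangle inequality then produces the factor $k$ in the stated estimate. Fix $j$. To linearize, I would pass to the bilinear form: since $\ltw^\ast = \ltwi$ with respect to the standard unweighted pairing, $\|S_j\|_{\ltw \to \ltw} = \sup |\langle S_j f, g\rangle|$ over unit vectors $f \in \ltw$ and $g \in \ltwi$, and expand
\[
\langle S_j f, g\rangle = \sum_{L \in \mathcal{L}_j}\ \sum_{\substack{I \in \mathcal{D}_m(L)\\ J \in \mathcal{D}_n(L)}} c^L_{I,J}\, \langle f, h_I\rangle\, \overline{\langle g, h_J\rangle}.
\]
The family $\mathcal{L}_j$ forms a coarse dyadic filtration that moves $k$ generations at a time, and within each $L \in \mathcal{L}_j$ the Haar shift acts only on coefficients at the two fixed scales $|L|/2^{m+1}$ and $|L|/2^{n+1}$, both strictly between $|L|$ and its coarse child scale $|L|/2^k$. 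Using $|c^L_{I,J}| \leq \sqrt{|I||J|}/|L|$ together with Cauchy--Schwarz, I expect to bound the local contribution at each $L$ by a matrix martingale-transform-type quantity in $f$ (paid for on $\ltw$) against a similar quantity in $g$ (paid for on $\ltwi$), the bound $N([W]_{A_2^d})$ being invoked once from the definition of $N$ applied to the coarse filtration, whose $A_2^d$ characteristic is dominated by that of $W$.

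After the martingale-transform step I would be left with a ``global'' quantity over $L \in \mathcal{L}_j$ involving the matrix averages $u_L = \langle f\rangle_L$, $v_L = \langle g\rangle_L$, $M_L = \langle W\rangle_L$, $N_L = \langle W^{-1}\rangle_L$, together with the per-interval densities $r_L = |L|^{-1}\int_L \langle Wf,f\rangle$ and $s_L = |L|^{-1}\int_L \langle W^{-1}g,g\rangle$. To control this quantity I would construct a Bellman function $B(u,v,M,N,r,s)$ on the convex domain $\{\|M^{1/2}N^{1/2}\| \leq [W]_{A_2^d},\ r \geq \langle Mu,u\rangle,\ s \geq \langle Nv,v\rangle\}$ that is bounded above by $C\,d\,N([W]_{A_2^d})\sqrt{rs}$ and that satisfies a discrete concavity inequality which, when telescoped along the coarse tree of $\mathcal{L}_j$, exactly reproduces the quadratic form extracted in the previous step. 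The dependence on $d$ is expected to enter through the normalisation of $B$, reflecting the matrix dimension.

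The main obstacle is the matrix concavity: the one-line scalar inequality that would make such a two-point Bellman estimate routine fails for generic matrix-valued averages, because matrix arithmetic and geometric means behave very differently from scalar ones. The workaround, outlined in the introduction, is not to demand concavity between a center $x$ and its two immediate children $x^{\pm}$ in the naive two-point martingale, but to interpolate two intermediate way-points that are close to $(x^++x^-)/2$---so that endpoints and transition probabilities from $x$ to $x^{\pm}$ are preserved---yet far enough from $x$ to restore the relevant matrix inequality. Establishing this ``modified-martingale'' concavity property of $B$ in the matrix setting is the heart of the argument; once it is in place, telescoping collapses the global quantity to $B$ evaluated at the initial data, which is bounded by $d\,N([W]_{A_2^d})\cdot\|f\|_{\ltw}\|g\|_{\ltwi} = d\,N([W]_{A_2^d})$, and summing over $0 \leq j \leq k-1$ finishes the proof.
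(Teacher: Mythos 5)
Your high-level outline is correct in spirit and largely tracks the paper's proof: decompose $S$ into $k$ scale-separated slices $S_j$, linearize to a bilinear form $\langle S_j f,g\rangle$, rewrite the shifted Haar coefficients in terms of differences of averages over $\mathcal{D}_k(L)$, and close the argument with a Bellman function and telescoping on the coarse tree $\mathcal{L}_j$, paying once for a martingale transform. However, several central pieces of the mechanism are missing or misidentified, so that as written the argument does not go through.

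First, you assert the Bellman function lives on a \emph{convex} domain. It does not: the constraint $I_d \le \vd^{1/2}\ud\,\vd^{1/2} \le X\cdot I_d$ defining the admissible pairs $(\ud,\vd)=(\langle W\rangle,\langle W^{-1}\rangle)$ is not a convex set, a fact the paper handles with a dedicated lemma showing that any chord with a midpoint in $\mathfrak{D}_X$ stays inside $\mathfrak{D}_{4X}$, together with a further enlargement to $\mathfrak{D}_{100X/9}$ to accommodate the perturbed points $A_I^{\pm}$. Relatedly, you misidentify the ``main obstacle'' as the failure of concavity of the Bellman function. In fact $\fb_X$ is (midpoint, hence genuinely) concave on its domain; the obstacle is (a) non-convexity of the domain and (b) the need to extract an $\ell^1$ quantity from a single concavity step, not a failure of concavity. (You also wrote the constraint $r\ge\langle Mu,u\rangle$ with $M=\langle W\rangle_L$; the Cauchy--Schwarz argument produces $r\ge\langle \langle W^{-1}\rangle_L^{-1}u,u\rangle$, which is weaker, so your domain would exclude the actual data.)

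Second, and most seriously, the linearization step is absent. After properties (i)--(iii) are in place, applying concavity once more to $A_{I_0}=\tfrac12(A_{I_0}^+ + A_{I_0}^-)$ only controls a single quadratic form $\langle P^i_{I_0}(\fd^+-\fd^-),P^i_{I_0}(\gd^+-\gd^-)\rangle$, while the target in \eqref{est:shift} is the full $\ell^1$ sum $\sum_{P,Q\in\mathcal{D}_k(L)}|\lambda^i_{PQ}|$ with roughly $4^k$ signed terms. The paper's device is to perturb the starting point into $A_{I_0}^\pm = A_{I_0} \pm 2^{-k}\sum_I\alpha^i_I A_I$ for a zero-sum, $\|\cdot\|_\infty\le 1/4$ real sequence $\alpha^i$, chosen (via K.~Ball's multiple Hahn--Banach theorem, exploiting that each $\Lambda^i = (\lambda^i_{KL})$ has complex rank one) so that $\bigl|\sum_{K,L}\alpha^i_K\alpha^i_L\lambda^i_{KL}\bigr| \gtrsim \sum_{K,L}|\lambda^i_{KL}|$, and then to run a modified martingale with weights $a_I^\pm=1\pm\alpha^i_I$ from $A_{I_0}^\pm$ down to the $k$-th generation. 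Without this extraction there is no route from the Bellman telescoping to the sum in \eqref{est:shift}.

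Finally, you misplace both $N$ and $d$. The martingale-transform norm $N([W]_{A_2^d})$ enters exclusively through property (ii) of the Bellman function, i.e. through the bound $\sum_i\sum_I |\langle P^i_I\langle f,h_I\rangle, P^i_I\langle g,h_I\rangle\rangle| \le d\,N([W]_{A_2^d})\|f\|_{\ltw}\|g\|_{\ltwi}$, which is derived by dominating the diagonal pairing by a single matrix martingale transform $T_\sigma$ with $\|\sigma\|_{\infty,W}\le 1$; it is \emph{not} invoked separately ``on the coarse filtration.'' The factor $d$ is precisely the number of eigenprojections $P^i_L$ onto the eigenbasis of $\langle W\rangle_L$: decomposing the coefficient pairing $\langle\langle f,h_I\rangle,\langle g,h_J\rangle\rangle$ along these projections is what reduces the shift to the martingale-transform estimate, and is not an artifact of ``normalising $B$.''
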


\section{Reduction of the proof of Theorem \ref{mainshift} }

Let \(W\) be a \(d \times d\) matrix \(A_2^d\) weight on \(\mathbb{R}\). For each \(I \in \dd\), choose an orthonormal basis of eigenvectors  \(B_I = \{e_I^1, e_I^2, \ldots, e_I^d\}\) of \( \langle W \rangle_I\), and let \(P_I^i, 1 \leq i \leq d\), be the corresponding orthogonal projection onto the span of \(e_I^i\). 

Using the definition of the martingale transform operator \(T_{\sigma}\) and the fact that each \(\langle W\rangle_I\) commutes with the \(P_I^i\)'s, we have for $f \in \ltw$, $g \in \ltwi$,
\begin{align} \label{est:projections}
& \sum_{i=1}^d \sd  \big|   \big \langle P_I^i \langle f,h_I \rangle, P_I^i \langle g,h_I \rangle \big \rangle_{\mathbb{C}^d} \big| \\ \nonumber
& \le d \cdot \sup_{\sigma}
\sd \big \langle \sigma_I \langle f,h_I \rangle, \langle g,h_I \rangle \big \rangle_{\mathbb{C}^d} \\   \nonumber
& = d \cdot \sup_{\sigma}
\langle T_\sigma f,g \rangle_{\ltw,\ltwi} \\\nonumber
&\le d \cdot \sup_{\sigma} \|T_{\sigma}\|_{\ltw \to \ltw}        \|f \|_{\ltw} \|g\|_{\ltwi} \\   \nonumber
&\le d \cdot N([W]_{A_2^d})   \|f \|_{\ltw} \|g\|_{\ltwi}, \nonumber
\end{align}
where the supremum is now taken over all matrix sequences \(\sigma = \{\sigma_I\}_{I \in \dd}\) such that \(\| \sigma\|_{\infty, W} \le 1\). Notice that it would suffice to just take the $\sigma_I$'s which are diagonal in the basis $B_I$.

We can thus rewrite the estimate (\ref{est:projections}) above as
\begin{multline} \label{linearisation}
 \sum_{i=1}^d \sd \left | \big \langle  P_I^i \big( \langle f \rangle _{I^+} - \langle f \rangle_{I^-} \big), P_I^i \big( \langle g \rangle _{I^+} - \langle g\rangle_{I^-} \big) \big \rangle_{\mathbb{C}^d} \right | \cdot |I| \\
 = 4 \sum_{i=1}^d \sd \left | \big \langle  P_I^i \langle f,h_I \rangle,  P_I^i \langle g,h_I \rangle \big \rangle_{\mathbb{C}^d} \right |  
 \leq 4 \, d \cdot N([W]_{A_2^d}) \|f\|_{\ltw} \|g\|_{\ltwi}
\end{multline}
for all \(f \in \ltw\) and \( g \in \ltwi.\)

Since \(S\) is a Haar shift operator of complexity \(k\), it has the form
\[S f = \sum_{L \in \dd}
\sum_{\substack{
            I \in \mathcal{D}_m(L) \\
            J \in \mathcal{D}_n(L)}}
c^{L}_{I,J} \langle f, h_I \rangle h_J,\]
where \(\left |c^{L}_{I,J}\right| \leq \frac{\sqrt{|I|} \sqrt{|J|}}{|L|} =2^{-(m+n)/2} \).

Let \(f \in \ltw,\  g \in \ltwi\) and \(0 \leq j \leq k-1\) be fixed. For the slice \(S_j\), we can write
\begin{align*}
& \left \langle S_j f,g \right \rangle_{\ltw,\ltwi}  = \Bigg \langle \sum_{L \in \mathcal{L}_j}
\sum_{\substack{
            I \in \mathcal{D}_m(L) \\
            J \in \mathcal{D}_n(L)}}
c^{L}_{I,J} \langle f, h_I \rangle h_J, \sum_{I' \in \dd} \langle g, h_{I'} \rangle h_{I'} \Bigg \rangle _{\ltw,\ltwi}\\
& \qquad = \sum_{L \in  \mathcal{L}_j} \sum_{I' \in \dd}
\sum_{\substack{
            I \in \mathcal{D}_m(L) \\
            J \in \mathcal{D}_n(L)}}
c^{L}_{I,J} \big \langle \langle f, h_I \rangle , \langle g, h_{I'} \rangle \big \rangle _{\mathbb{C}^d} \langle h_J,h_{I'} \rangle_{L^2(\mathbb{R}),L^2(\mathbb{R})} \\
& \qquad = \sum_{L \in  \mathcal{L}_j}
\sum_{\substack{
            I \in \mathcal{D}_m(L) \\
            J \in \mathcal{D}_n(L)}}
c^{L}_{I,J} \big \langle \langle f, h_I \rangle , \langle g, h_J \rangle \big \rangle _{\mathbb{C}^d} 
= \sum_{L \in  \mathcal{L}_j} \sum_{i = 1}^d  
\sum_{\substack{
            I \in \mathcal{D}_m(L) \\
            J \in \mathcal{D}_n(L)}}
c^{L}_{I,J} \big \langle  P_L^i \langle f, h_I \rangle , P_L^i \langle g, h_J \rangle \big \rangle _{\mathbb{C}^d} \\
& \qquad = \sum_{L \in  \mathcal{L}_j} \sum_{i = 1}^d  
\sum_{\substack{
            I \in \mathcal{D}_m(L) \\
            J \in \mathcal{D}_n(L)}}
c^{L}_{I,J} \frac{|I|^{1/2}}{2^{k-m}} \frac{|J|^{1/2}}{2^{k-n}} \Bigg \langle
\sum_{\substack{
            P \in \mathcal{D}_k(L) \\
            P \subset I^{+}}}
P_L^i \big ( \langle f \rangle _P -  \langle f \rangle _L \big ) +
\sum_{\substack{
            P \in \mathcal{D}_k(L) \\
            P \subset I^{-}}}
P_L^i \big ( \langle f \rangle _L -  \langle f \rangle _P \big ) , \Bigg. \\
& \hspace{6 cm} \Bigg.
\sum_{\substack{
            Q \in \mathcal{D}_k(L) \\
            Q \subset J^{+}}}
P_L^i \big( \langle g \rangle _Q -  \langle g \rangle _L \big ) +
\sum_{\substack{
            Q \in \mathcal{D}_k(L) \\
            Q \subset J^{-}}}
P_L^i \big ( \langle g \rangle _L -  \langle g \rangle _Q \big )
\Bigg \rangle_{\mathbb{C}^d}.
\end{align*}

We therefore have
\begin{align} \label{est:shift}
& \Big | \left \langle S_j f,g \right \rangle_{\ltw,\ltwi}  \Big|   \\ \nonumber
& \leq \sum_{L \in  \mathcal{L}_j} |L| \sum_{i = 1}^d 
\sum_{P,Q \in \mathcal{D}_k(L)} \bigg | \bigg \langle P_L^i \bigg (  \frac{\langle f \rangle _P -  \langle f \rangle _L}{2^k} \bigg ), P_L^i \bigg (\frac{\langle g \rangle _Q -  \langle g \rangle _L}{2^k}  \bigg )  \bigg \rangle_{\mathbb{C}^d}  \bigg|.  \nonumber
\end{align}

\section{The Bellman function}

We are now going to define the Bellman function associated to our problem. Let \(X >1\), fix a dyadic interval \(I_0\), and for \(\fd \in \mathbb{C}^d, \Fd \in \mathbb{R}, \ud \in \mathcal{M}_d(\mathbb{C}), \gd \in \mathbb{C}^d, \Gd \in \mathbb{R}, \vd \in \mathcal{M}_d(\mathbb{C})\) satisfying
\begin{equation}\label{domain}
\ud, \vd >0, I_d \leq \vd^{1/2} \ud \vd^{1/2} \leq X \cdot I_d, \|\vd^{-1/2} \fd\|_{\mathbb{C}^d}^2 \leq \Fd,\ \| \ud^{-1/2} \gd\|_{\mathbb{C}^d}^2 \leq \Gd ,
\end{equation}
define the function \(\fb_X=\fb_X^{I_0}: \mathbb{C}^d \times \mathbb{R} \times \mathcal{M}_d(\mathbb{C}) \times \mathbb{C}^d \times \mathbb{R} \times \mathcal{M}_d(\mathbb{C}),\) by
\begin{equation}   \label{bell}
\fb_X(\fd, \Fd, \ud, \gd, \Gd, \vd):=
 |I_0|^{-1} \sup \sum_{I \subseteq I_0} \left | \big \langle \sigma_I \big ( \langle f \rangle _{I^+} - \langle f \rangle_{I^-} \big ), \langle g \rangle _{I^+} - \langle g\rangle_{I^-}   \big \rangle_{\mathbb{C}^d} \right | \cdot |I|,
 \end{equation}
where the supremum is taken over all functions \(f,g :\mathbb{R} \to \mathbb{C}^d\) and matrix  \(A_2\) weights \(W\) on \(I_0\) such that
\begin{equation}\label{supdomain}
\langle f \rangle _{I_0}=\fd \in \mathbb{C}^d, \quad \big \langle \|W^{1/2} f\|^2_{\mathbb{C}^d} \big \rangle_{I_0} = \Fd \in \mathbb{R}, \quad \langle g \rangle _{I_0}=\gd \in \mathbb{C}^d, \quad \big \langle \|W^{-1/2} g\|^2_{\mathbb{C}^d} \big \rangle_{I_0} = \Gd \in \mathbb{R}, 
\end{equation}
\begin{equation}\label{weightdomain}
\sup_{\substack {
I \in \mathcal{D} \\
I \subset I_0} }
\|\langle W \rangle_I ^{1/2} \langle W^{-1} \rangle_I ^{1/2} \|^2 \leq X, \quad \langle W \rangle _{I_0} = \ud, \quad \langle W^{-1} \rangle_{I_0} = \vd,
\end{equation}
and all sequences of $d \times d$ matrices $\sigma = \{\sigma_I\}_{I \in \mathcal{D}}$ with $\|\sigma\|_{\infty, W} \le 1$.


The Bellman function \(\fb_X\) has the following properties:
\begin{enumerate}[(i)]
    \item (Domain) The domain \(\mathfrak{D}_X:=\mathrm{Dom}\, \fb_X\) is given by \eqref{domain}. This means that for every tuple \((\fd, \Fd, \ud, \gd, \Gd, \vd)\) that satisfies \eqref{domain}, there exist functions \(f, g\) and a matrix weight \(W\) such that \eqref{supdomain} holds, so the supremum is not \(-\infty\). Conversely, if the variables \(\fd, \Fd, \ud, \gd, \Gd, \vd\) are the corresponding averages of some functions \(f, g\) and \(W\), then they must satisfy condition \eqref{domain}. Since the set \(\{(\ud,\vd) \in \mathcal{M}_d(\mathbb{C}) \times \mathcal{M}_d(\mathbb{C}): \ud,\vd >0, I_d \leq \vd^{1/2} \ud \vd^{1/2} \leq X \cdot I_d\}\) is not convex, the domain \(\mathfrak{D}_X\) is not convex either. 
   \item (Range) \(0 \leq \fb_X(\fd, \Fd, \ud, \gd, \Gd, \vd) \leq 4  N(X) \Fd^{1/2} \Gd^{1/2}\) for all \((\fd, \Fd, \ud, \gd, \Gd, \vd) \in \mathfrak{D}_X.\)
    \item (Concavity condition) Consider all tuples \(A=(\fd, \Fd, \ud, \gd, \Gd, \vd), A_+=(\fd_+, \Fd_+, \ud_+, \gd_+, \Gd_+, \vd_+)\) and \(A_-=(\fd_-, \Fd_-, \ud_-, \gd_-, \Gd_-, \vd_-)\) in \(\mathfrak{D}_X\) such that \(A=(A_+ + A_-)/2\). 
    For all such tuples, we have the following concavity condition:
        \[\fb_X(A) \geq \frac{\fb_X(A_+)+\fb_X(A_-)}{2} + 
             \sup_{\| \tau\|_{\ud} \le 1 } \left| \left \langle  \tau (\fd_+ - \fd_- ), \gd_+ - \gd_-  \right \rangle_{\mathbb{C}^d}  \right| .\]
  \end{enumerate}
Here, the supremum is taken over all $d \times d$ matrices $\tau$ with $\|\tau\|_{\ud} := \|\ud^{1/2} \tau \ud^{-1/2}\| \le 1$.
\par
Let us now explain these properties of the function \(\fb_X\). For any matrix weight \(W\) and any interval \(I\) we have \(\langle W^{-1} \rangle_I ^{1/2} \langle W \rangle_I \langle W^{-1} \rangle_I ^{1/2} \geq I_d\), so \(\vd^{1/2} \ud \vd^{1/2} \geq I_d\). The inequality \(\vd^{1/2} \ud \vd^{1/2} \leq X \cdot I_d\) follows from the definition of the matrix \(A_2\) Muckenhoupt condition. Conversely, for any positive definite matrices \(\ud, \vd\) such that \(I_d \leq \vd^{1/2} \ud \vd^{1/2} \leq X \cdot I_d\), we can find a matrix weight \(W\) that satisfies \eqref{weightdomain}. To see this, we construct a matrix weight \(W\) that is constant on the children of  \(I_0\).\\
Given two matrices \(\ud\) and  \(\vd\) as above, we want to find two positive definite matrices, \(W_1\) and \(W_2\), such that 
\[\ud=\frac{1}{2}(W_1+W_2) \quad \mbox{and}  \quad \vd=\frac{1}{2}(W_1^{-1}+W_2^{-1}).\]
We have \(\ud = W_1 \vd W_2 = W_2 \vd W_1\), thus \(W_2^{-1} = \vd W_1 \ud^{-1} = \ud^{-1} W_1 \vd\). Let \(M:= \ud^{-1/2} W_1 \ud^{-1/2}\), \(\ N:= \ud^{-1/2} \vd^{-1} \ud^{-1/2}\), and notice that \(N \leq I_d\). Then the matrices \(M\) and \(N\) commute: 
\begin{align*}
N^{-1}M & = (\ud^{1/2} \vd \ud^{1/2}) (\ud^{-1/2} W_1 \ud^{-1/2}) = \ud^{1/2} (\vd W_1 \ud^{-1}) \ud^{1/2} \\
              & = \ud^{1/2} (\ud^{-1} W_1 \vd) \ud^{1/2} = (\ud^{-1/2} W_1 \ud^{-1/2})  (\ud^{1/2} \vd \ud^{1/2}) = MN^{-1}.
\end{align*}
Furthermore, \(\ud = \frac{1}{2}(W_1 + W_2) = \frac{1}{2}(W_1 + \ud W_1^{-1} \vd^{-1})\), so \(W_1 = \frac{1}{2}(W_1 \ud^{-1} W_1 + \vd^{-1})\). It follows that 
\[M = \frac{1}{2}(\ud^{-1/2} W_1 \ud^{-1} W_1 \ud^{-1/2} + \ud^{-1/2} \vd^{-1} \ud^{-1/2}) = \frac{1}{2}(M^2+N),\]
hence \(M\) satisfies the quadratic equation \((M^2 - 2M + I_d) - (I_d-N) = 0\). Choosing \(M = I_d + (I_d-N)^{1/2}\), we obtain
\[W_1 = \ud^{1/2} M \ud^{1/2} = \ud^{1/2} (I_d + (I_d-N)^{1/2}) \ud^{1/2},\]
and 
\[W_2 = 2\ud - W_1 = \ud^{1/2} (I_d - (I_d-N)^{1/2}) \ud^{1/2}.\]
It is clear that both \(W_1\) and \(W_2\) are positive definite matrices. We now set \(W:= W_1 \chi_{I_0^+} + W_2 \chi_{I_0^-}\) and notice that \(W\) satisfies the required properties \eqref{weightdomain}. 

The inequalities \(\|\vd^{-1/2} \fd\|_{\mathbb{C}^d}^2 \leq \Fd \) and \(\| \ud^{-1/2} \gd\|_{\mathbb{C}^d}^2 \leq \Gd\) follow from the Cauchy-Schwarz Inequality. To see this, choose a unit vector \(e \in \mathbb{C}^d\) such that \(\|\vd^{-1/2} \fd\|_{\mathbb{C}^d} = |\langle \vd^{-1/2} \fd ,e \rangle_{\mathbb{C}^d} |\). We then have 
\begin{align*}
 |\langle \vd^{-1/2} \fd ,e \rangle_{\mathbb{C}^d} | & = \big | \big \langle \vd^{-1/2} \langle f \rangle _{I_0} ,e  \big \rangle_{\mathbb{C}^d} \big | = \big | \big \langle \langle \vd^{-1/2}f \rangle _{I_0},e \big \rangle_{\mathbb{C}^d} \big | \\
  & = \bigg | \Big \langle \frac{1}{|I_0|} \int_{I_0} \vd^{-1/2} W^{-1/2}(t) W^{1/2}(t) f(t) \, \mathrm{d}t ,e \Big \rangle_{\mathbb{C}^d} \bigg |  \\
  & = \bigg | \frac{1}{|I_0|} \int_{I_0} \big \langle   W^{1/2}(t) f(t) ,W^{-1/2}(t) \vd^{-1/2}  e \big \rangle_{\mathbb{C}^d}  \mathrm{d}t \bigg |  \\
  & \leq \bigg ( \frac{1}{|I_0|} \int_{I_0}  \| W^{1/2}(t) f(t)\|_{\mathbb{C}^d}^2 \mathrm{d}t \bigg )^{1/2}  \bigg ( \frac{1}{|I_0|} \int_{I_0}  \| W^{-1/2}(t) \vd^{-1/2}  e\|_{\mathbb{C}^d}^2 \mathrm{d}t \bigg )^{1/2} \\
  & = \Fd^{1/2}  \bigg ( \frac{1}{|I_0|} \int_{I_0} \big  \langle W^{-1}(t) \vd^{-1/2}  e, \vd^{-1/2} e \big \rangle_{\mathbb{C}^d} \mathrm{d}t \bigg )^{1/2} \\
  & = \Fd^{1/2}  \Big \langle \frac{1}{|I_0|} \int_{I_0} W^{-1}(t) \vd^{-1/2}  e \, \mathrm{d}t , \vd^{-1/2} e \Big \rangle_{\mathbb{C}^d}  \\
  & = \Fd^{1/2} \big \langle \vd \vd^{-1/2}  e,   \vd^{-1/2} e \big \rangle_{\mathbb{C}^d} =   \Fd^{1/2},
\end{align*}
since all matrices involved are positive definite. The other inequality follows in the same way.

On the other hand, given a tuple \((\fd, \Fd, \ud, \gd, \Gd, \vd) \in \mathfrak{D}\) and a matrix weight \(W\) satisfying (\ref{weightdomain}),
we can always find two functions \(f, g\) satisfying \eqref{supdomain}. We first choose a function \(\phi : \mathbb{R} \to \mathbb{C}^d\) such that 
\[\int_{I_0} \phi(t)\, \mathrm{d}t =0, \quad  \int_{I_0} W(t) \phi(t)\, \mathrm{d}t =0, \quad  \frac{1}{|I_0|} \int_{I_0} \|W^{1/2}(t) \phi(t) \|^2_{\mathbb{C}^d}\, \mathrm{d}t = 1,\]
and then set \(f(t):= W^{-1}(t) \vd^{-1} \fd + (\Fd - \|\vd^{-1/2}\fd\|^2)^{1/2} \phi(t)\). It can be easily checked that this function has the required properties. A similar argument allows us to construct the function \(g\). 

Property (ii) follows from the definition of \(\fb_X\) and the inequality \eqref{linearisation}.

To prove the concavity condition, we consider three tuples \(A, A_+, A_- \in \mathfrak{D}_X\) such that \(A=(A_+ + A_-)/2\) and choose two functions \(f, g\) and a matrix weight \(W\) on \(I_0\) so that
\begin{equation}\label{avI0+-}
A_{\pm}= \Big ( \langle f \rangle _{I_0^{\pm}},\ \big \langle \|W^{1/2}f\|_{\mathbb{C}^d}^2 \big \rangle_{I_0^{\pm}},\ \langle W \rangle _{I_0^{\pm}},\ \langle g \rangle _{I_0^{\pm}},\ \big \langle \|W^{-1/2}g\|_{\mathbb{C}^d}^2 \big \rangle_{I_0^{\pm}},\ \langle W^{-1} \rangle _{I_0^{\pm}} \Big ).
\end{equation}
Then
\[A=\frac{A_+ + A_-}{2} = \Big ( \langle f \rangle _{I_0},\ \big \langle \|W^{1/2} f\|_{\mathbb{C}^d}^2 \big \rangle_{I_0},\ \langle W \rangle _{I_0},\ \langle g \rangle _{I_0},\ \big \langle \|W^{-1/2} g\|_{\mathbb{C}^d}^2 \big \rangle_{I_0}\ \langle W^{-1} \rangle _{I_0} \Big )\]
is the vector of corresponding averages over \(I_0\). The expression in the definition of \(\fb_X(\fd, \Fd, \ud, \gd, \Gd, \vd)\), before taking the supremum, can be split into the average of the corresponding expressions for \(\fb_X(\fd_+, \Fd_+, \ud_+, \gd_+, \Gd_+, \vd_+)\) and \(\fb_X(\fd_-, \Fd_-, \ud_-, \gd_-, \Gd_-, \vd_-)\), plus the term 
\[ \sup_{\sigma_{I_0}: \|\ud^{1/2}  \sigma_{I_0} \ud^{-1/2} \|  \le 1 } \big| \left \langle  \sigma_{I_0} (\fd_+ - \fd_- ), (\gd_+ - \gd_- ) \right \rangle_{\mathbb{C}^d} \big |. \]

 Taking now the supremum over all \(f, g\) and \(W\) that satisfy conditions \eqref{avI0+-} we conclude that
\[\frac{\fb_X(A_+)+\fb_X(A_-)}{2} + \sup_{\| \tau\|_{\ud} \le 1 } \big| \left \langle  \tau (\fd_+ - \fd_- ), (\gd_+ - \gd_- ) \right \rangle_{\mathbb{C}^d}  \big|  \leq \fb_X(A).\]


This inequality is true because the set of functions over which we are taking the supremum is smaller than the one corresponding to \(\fb_X(A)\), since we are excluding all those functions \(f, g\) and \(W\) whose averages on the children of \(I_0\) are not the prescribed values in \eqref{avI0+-}.
\begin{remark}
\normalfont
The concavity condition (iii) implies that the function \(\fb_X\) is midpoint concave, that is \(\fb_X \big(\frac{A_+ + A_-}{2}\big) \geq \frac{1}{2} \big(\fb_X(A_+) + \fb_X(A_-) \big)\), for all \(A_+, A_- \in \mathfrak{D}_X \) with \( \frac{A_+ + A_-}{2} \in \mathfrak{D}_X$.
 It is well-known that locally bounded below midpoint concave functions are actually concave (see e.g.~\cite{RoVa73}, Theorem C, p. 215). Therefore \(\fb_X\) is a concave function.
\end{remark}

We conclude this section with a result that allows us to overcome the non-convexity of the domain of the Bellman function.
\begin{lemma}\label{extdom}
Let \(A, A_+, A_- \in \mathfrak{D}_X\) such that \(A=(A_+ + A_-)/2\) . Then the line segment with endpoints \(A_+\) and \(A_-\) belongs to \(\mathfrak{D}_{4X}\).
\end{lemma}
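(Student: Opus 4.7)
The plan is to parametrize the segment as $A_t := (1-t)A_- + t A_+$ for $t \in [0,1]$ and verify each of the four conditions defining $\mathfrak{D}_{4X}$ in \eqref{domain} pointwise in $t$. Positivity of $\ud_t$ and $\vd_t$ is automatic from convexity of the positive cone. The substantive work lies in checking the two-sided matrix $A_2$ bound $I_d \leq \vd_t^{1/2} \ud_t \vd_t^{1/2} \leq 4X\cdot I_d$ and the Cauchy--Schwarz-type inequalities $\|\vd_t^{-1/2}\fd_t\|_{\mathbb{C}^d}^2 \leq \Fd_t$ and $\|\ud_t^{-1/2}\gd_t\|_{\mathbb{C}^d}^2 \leq \Gd_t$.

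For the upper bound I exploit the midpoint hypothesis $A = (A_+ + A_-)/2$, which gives $\ud_+ + \ud_- = 2\ud$ and $\vd_+ + \vd_- = 2\vd$, so that $\ud_t \leq \ud_+ + \ud_- = 2\ud$ and $\vd_t \leq 2\vd$ for every $t \in [0,1]$. Combined with operator monotonicity of conjugation and the well-known identity $\|V^{1/2}UV^{1/2}\| = \|U^{1/2}VU^{1/2}\|$ (both matrices are similar to $UV$), two successive passes yield
\[
\|\vd_t^{1/2}\ud_t\vd_t^{1/2}\| \leq 2\|\vd_t^{1/2}\ud\vd_t^{1/2}\| = 2\|\ud^{1/2}\vd_t\ud^{1/2}\| \leq 4\|\ud^{1/2}\vd\ud^{1/2}\| = 4\|\vd^{1/2}\ud\vd^{1/2}\| \leq 4X,
\]
which is precisely the factor-of-$4$ loss stated in the lemma.

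For the lower bound I rewrite $\vd_t^{1/2}\ud_t\vd_t^{1/2} \geq I_d$ as $\ud_t \geq \vd_t^{-1}$ and invoke operator convexity of matrix inversion: $\vd_t^{-1} \leq (1-t)\vd_-^{-1} + t\vd_+^{-1}$. Since $A_\pm \in \mathfrak{D}_X$ gives $\ud_\pm \geq \vd_\pm^{-1}$, the right-hand side is dominated by $(1-t)\ud_- + t\ud_+ = \ud_t$, settling the lower bound. For the two remaining conditions I rely on the joint convexity of the map $(V,f) \mapsto \langle V^{-1}f, f\rangle_{\mathbb{C}^d}$ on positive definite $V$ and vectors $f \in \mathbb{C}^d$ (a standard consequence of the Schur complement description of its epigraph). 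Applied to $(\vd_\pm, \fd_\pm)$ and $(\ud_\pm, \gd_\pm)$, this immediately yields $\|\vd_t^{-1/2}\fd_t\|_{\mathbb{C}^d}^2 \leq (1-t)\Fd_- + t\Fd_+ = \Fd_t$ and its symmetric counterpart for $(\ud_t,\gd_t,\Gd_t)$.

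The only delicate point is the upper $A_2$ bound: since the $A_2$ region is non-convex, a naive convex-combination argument fails, and the midpoint hypothesis is used precisely to pay the factor of $4$ via the separate dominations $\ud_t \leq 2\ud$ and $\vd_t \leq 2\vd$ together with the norm-swap identity above. The remaining verifications reduce to well-known operator-convexity facts.
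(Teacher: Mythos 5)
Your proof is correct, and it follows the same overall architecture as the paper: handle the three convex constraints defining $\mathfrak{D}_\infty$ (positivity, the lower $A_2$ bound, and the two Cauchy--Schwarz constraints) by a direct convexity argument, and then treat the non-convex upper $A_2$ bound separately by exploiting the midpoint hypothesis to get $\ud_t \leq 2\ud$, $\vd_t \leq 2\vd$, followed by the norm-swap identity and two applications of operator monotonicity of conjugation, producing the factor $4$. That last step is essentially identical to the paper's.

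Where you diverge is in the verification that $\mathfrak{D}_\infty$ is convex, and your route is noticeably more streamlined. The paper proves convexity of the constraint $\|\vd^{-1/2}\fd\|^2 \leq \Fd$ by an explicit Hilbert--Schmidt computation (expanding $(\vd_1+\vd_2)^{-1}$ via resolvent identities and completing the square), whereas you invoke the standard joint convexity of $(\vd,\fd) \mapsto \langle \vd^{-1}\fd,\fd\rangle$, which follows at once from the Schur-complement description of its epigraph as a PSD cone constraint. Likewise, for the lower bound $I_d \leq \vd^{1/2}\ud\vd^{1/2}$ the paper reduces to a cubic matrix identity $(T-I_d)(T+I_d)(T-I_d) \geq 0$ after a change of variables, while you simply cite operator convexity of matrix inversion combined with $\ud_\pm \geq \vd_\pm^{-1}$. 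Your method is cleaner and assumes more background; the paper's is longer but self-contained and elementary. Both are valid, and the conclusion is the same.
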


\begin{proof}
We start by proving that the set $\mathfrak{D}_\infty$ given by the inequalities
\begin{equation*}
\ud, \vd >0, I_d \leq \vd^{1/2} \ud \vd^{1/2}, \|\vd^{-1/2} \fd\|_{\mathbb{C}^d}^2 \leq \Fd,\ \| \ud^{-1/2} \gd\|_{\mathbb{C}^d}^2 \leq \Gd 
\end{equation*}
is convex.

We first prove that the inequality \(\|\vd^{-1/2} \fd\|_{\mathbb{C}^d}^2 \leq \Fd\) is convex (the other inequality, \(\ \| \ud^{-1/2} \gd\|_{\mathbb{C}^d}^2 \leq \Gd\), follows in a similar way). It is enough to show that if \(\|\vd_1^{-1/2} \fd_1\|_{\mathbb{C}^d}^2 \leq \Fd_1\) and \(\|\vd_2^{-1/2} \fd_2\|_{\mathbb{C}^d}^2 \leq \Fd_2\), then 
\begin{equation}   \label{est:convex}
\bigg \| \left( \frac{1}{2}(\vd_1 + \vd_2)\right)^{-1/2} \frac{1}{2}(\fd_1 + \fd_2)\bigg \|_{\mathbb{C}^d}^2 \leq \frac{1}{2}(\Fd_1 + \Fd_2).
\end{equation}
We have 
\begin{align*}
\|(\vd_1 + \vd_2)^{-1/2} (\fd_1 + \fd_2)\|_{\mathbb{C}^d}^2 & = \left \langle (\vd_1 + \vd_2)^{-1}, (\fd_1 + \fd_2) \otimes (\fd_1 + \fd_2) \right \rangle _{HS}  \\
 & = \left \langle (\vd_1 + \vd_2)^{-1}, \fd_1 \otimes \fd_1 + (\fd_1 \otimes \fd_2 + \fd_2 \otimes \fd_1) + \fd_2 \otimes \fd_2 \right \rangle _{HS} \\
 & =: T_1 + T_2 + T_3,
 \end{align*}
 where \(\langle \cdot, \cdot \rangle_{HS}\) denotes the Hilbert-Schmidt (trace) inner product. \\
 Using the identities 
 \[(\vd_1 + \vd_2)^{-1} = \vd_1^{-1} - \vd_1^{-1} \vd_2 (\vd_1 + \vd_2)^{-1}  \quad \mbox{and}  \quad (\vd_1 + \vd_2)^{-1} = \vd_2^{-1} - \vd_2^{-1} \vd_1 (\vd_1 + \vd_2)^{-1}, \]
 we get that
 \[T_1 = \left \langle (\vd_1 + \vd_2)^{-1}, \fd_1 \otimes \fd_1 \right \rangle_{HS} = \|\vd_1^{-1/2} \fd_1\|_{\mathbb{C}^d}^2 - \left \langle \vd_1^{-1} \vd_2 (\vd_1 + \vd_2)^{-1}, \fd_1 \otimes \fd_1 \right \rangle_{HS},\]
 and 
 \[T_3 = \left \langle (\vd_1 + \vd_2)^{-1}, \fd_2 \otimes \fd_2 \right \rangle_{HS} = \|\vd_2^{-1/2} \fd_2\|_{\mathbb{C}^d}^2 - \left \langle \vd_2^{-1} \vd_1 (\vd_1 + \vd_2)^{-1}, \fd_2 \otimes \fd_2 \right \rangle_{HS}.\]

 Noting that $\vd_1^{-1} \vd_2 (\vd_1 + \vd_2)^{-1} = (\vd_1 + \vd_2)^{-1}  \vd_2 \vd_1^{-1} >0$ and writing $\tilde \fd_2 =\vd_1 \vd_2^{-1} \fd_2$, we find that
 \begin{align*}
T_1 + T_2 + T_3
 & \leq  -  \left \langle \vd_1^{-1} \vd_2 (\vd_1 + \vd_2)^{-1}, \fd_1 \otimes \fd_1 \right \rangle_{HS}  - \left \langle  \vd_1^{-1} \vd_2 (\vd_1 + \vd_2)^{-1}, \tilde \fd_2 \otimes \tilde \fd_2 \right \rangle_{HS} \\
& \qquad  + \left \langle  \vd_1^{-1} \vd_2 (\vd_1 + \vd_2)^{-1}, \fd_1 \otimes  \tilde \fd_2  \right \rangle_{HS}  
+   \left \langle  \vd_1^{-1} \vd_2 (\vd_1 + \vd_2)^{-1}, \tilde \fd_2 \otimes \fd_1  \right \rangle_{HS}   +  \Fd_1 + \Fd_2  \\
&=    -  \left \langle \vd_1^{-1} \vd_2 (\vd_1 + \vd_2)^{-1}, (\fd_1 -   \tilde \fd_2 ) \otimes( \fd_1 -  \tilde \fd_2 )   \right \rangle_{HS}+  \Fd_1 + \Fd_2
\leq \Fd_1 + \Fd_2. \\
 \end{align*}
 This concludes the proof of our claim.

We now check that the set \(C_0:=\{(\ud,\vd) \in \mathcal{M}_d(\mathbb{C}) \times \mathcal{M}_d(\mathbb{C}): \ud,\vd >0, I_d \leq \vd^{1/2} \ud \vd^{1/2}\}\) is convex. As before, it is enough to show that it is midpoint convex. \\
Let \( (\ud_1,\vd_1), (\ud_2,\vd_2) \in C_0\). We have to prove that 
\[I_d \leq  \left( \frac{\vd_1+\vd_2}{2}  \right)^{1/2}  \left( \frac{\ud_1+\ud_2}{2}  \right) \left( \frac{\vd_1+\vd_2}{2} \right)^{1/2},\]
which is equivalent to 
\[(\vd_1+\vd_2)(\ud_1+\ud_2)(\vd_1+\vd_2) \geq 4(\vd_1+\vd_2).\]
Since \( (\ud_1,\vd_1), (\ud_2,\vd_2) \in C_0\), we have \(\ud_1 \geq \vd_1^{-1}\) and \(\ud_2 \geq \vd_2^{-1}\), so
\begin {align*}
(\vd_1+\vd_2)(\ud_1+\ud_2)(\vd_1+\vd_2) & \geq (\vd_1+\vd_2)(\vd_1^{-1}+\vd_2^{-1})(\vd_1+\vd_2) \\
 & = 3 \vd_1 + 3 \vd_2 + \vd_1 \vd_2^{-1} \vd_1 + \vd_2 \vd_1^{-1} \vd_2 .
 \end{align*}
It is therefore enough to check that 
\[ \vd_1 \vd_2^{-1} \vd_1 + \vd_2 \vd_1^{-1} \vd_2  - \vd_1 - \vd_2 \geq 0,\]
which is the same as showing that 
\[\vd_1 ^{1/2}\vd_2^{-1} \vd_1^{1/2} + \vd_1 ^{-1/2}\vd_2 \vd_1^{-1/2} \vd_1 ^{-1/2}\vd_2 \vd_1^{-1/2} -I_d -  \vd_1 ^{-1/2}\vd_2 \vd_1^{-1/2} \geq 0.\]
Let \(T:=\vd_1 ^{1/2}\vd_2^{-1} \vd_1^{1/2} >0\). The previous inequality becomes \(T+T^{-2} - I_d -T^{-1} \geq 0\), which is equivalent to \(T^3 +I_d -T^2 - T \geq 0\). But \(T^3 +I_d -T^2 - T = (T-I_d)(T+I_d)(T-I_d),\) and this is a positive semidefinite matrix since \(T+I_d \geq 0\). This concludes the proof of the convexity of \(C_0\).  

To finish the proof of the lemma, we have to show that if \( (\ud,\vd), (\ud_+,\vd_+), (\ud_-,\vd_-)\) are in the set \(C_X:=\{(\ud,\vd) \in \mathcal{M}_d(\mathbb{C}) \times \mathcal{M}_d(\mathbb{C}): \ud,\vd >0,  \vd^{1/2} \ud \vd^{1/2} \leq X \cdot I_d\}\) and \((\ud,\vd) = \frac{1}{2}[(\ud_+,\vd_+) + (\ud_-,\vd_-)]\), then for all \(\theta \in [0,1]\), the points \((\ud_{\theta}, \vd_{\theta}) = (\theta \ud_+ + (1-\theta)\ud_-, \theta \vd_+ + (1-\theta)\vd_-)\) belong to the set \(C_{4X}\).

Since \(\theta \in [0,1]\), we have \(\theta \ud_+ \leq \ud_+\) and \( (1-\theta)\ud_- \leq \ud_-\), so \(\ud_{\theta} \leq \ud_+ + \ud_-=2 \ud\); we also have \(\vd_{\theta} \leq 2 \vd\). It is then sufficient to show that \(\vd_{\theta}^{1/2} (\ud_+ + \ud_-) \vd_{\theta}^{1/2} \leq 4X I_d\). But this is equivalent to \(\|\vd_{\theta}^{1/2} (\ud_+ + \ud_-) \vd_{\theta}^{1/2} \| \leq 4X\). All matrices that appear are positive definite, so \(\|\vd_{\theta}^{1/2} (\ud_+ + \ud_-) \vd_{\theta}^{1/2} \| = \| (\ud_+ + \ud_-) ^{1/2} \vd_{\theta} (\ud_+ + \ud_-) ^{1/2}\|\). Then again \( \| (\ud_+ + \ud_-) ^{1/2} \vd_{\theta} (\ud_+ + \ud_-) ^{1/2}\| \leq 4X\) if and only \((\ud_+ + \ud_-) ^{1/2} \vd_{\theta} (\ud_+ + \ud_-) ^{1/2} \leq 4XI_d\). 
We finally have 
\[(\ud_+ + \ud_-) ^{1/2} \vd_{\theta} (\ud_+ + \ud_-) ^{1/2} = 2 \ud^{1/2} \vd_{\theta} \ud^{1/2} \leq 4 \ud^{1/2} \vd \ud^{1/2}  \leq 4XI_d,\]
since \((\ud,\vd)\), and thus also \( (\vd,\ud)\), are in the set \(C_X\), so the proof of the lemma is complete.
\end{proof}

\section{The main estimate}

The following result is the main tool in the proof of Theorem \ref{mainshift}.
\begin{lemma}\label{mainlemma}
Let \(X>1\) and \(\fb_X\) be a function satisfying properties (i)-(iii) from Section 6. Fix \(k \geq 1\) and a dyadic interval \(I_0\). For all \(I \in \mathcal{D}_n(I_0),\ 0 \leq n \leq k,\) let the points \(A_I= (\fd_I, \Fd_I, \ud_I, \gd_I, \Gd_I, \vd_I) \in \mathfrak{D}_X=\mathrm{Dom}\, \fb_X\) be given. Assume that the points \(A_I\) satisfy the dyadic martingale dynamics, i.e. \(A=(A_{I^+}+A_{I^-})/2,\) where \(I^+\) and \(I^-\) are the children of \(I\).  Let \(B_{I_0} = \{e_{I_0}^1, e_{I_0}^2, \ldots, e_{I_0}^d\}\) be an orthonormal basis of eigenvectors of \( \ud_{I_0}\) and for $1 \le i \le d$, let  \(P_{I_0}^i\) be the orthogonal projection onto the span of \(e_{I_0}^i\).
For \(1 \leq i \leq d\) and \(K,L \in \mathcal{D}_k(I_0)\), we define the coefficients \(\lambda_{KL}^i\) by
\[\lambda_{KL}^i := \bigg \langle P_{I_0}^i \bigg ( \frac{\fd_K - \fd_{I_0}}{2^k} \bigg ), P_{I_0}^i \bigg ( \frac{\gd_L - \gd_{I_0}}{2^k} \bigg ) \bigg \rangle_{\mathbb{C}^d}  .\]

Then
\[\sum_{i = 1}^d \sum_{K,L \in \mathcal{D}_k({I_0})} |\lambda_{KL}^i| \leq c \cdot d \bigg( \fb_{X'}(A_{I_0}) - 2^{-k} \sum_{I \in \mathcal{D}_k(I_0)} \fb_{X'}(A_I) \bigg),\]
where \(c\) is a positive absolute constant and \(X'=\frac{100}{9}X\).
\end{lemma}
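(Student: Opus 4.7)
The plan is to adapt Treil's scalar Bellman-function argument to the matrix setting by linearising the cubic-looking sum on the left-hand side and applying the concavity property (iii) of $\fb_{X'}$ along a non-standard binary martingale from $A_{I_0}$ down to $\{A_K\}_{K \in \mathcal{D}_k(I_0)}$, rather than along the standard dyadic martingale through the intermediate $A_I$'s.

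First I would exploit the rank-one structure of $P^i_{I_0}$. Writing $\alpha^i_K := \langle \fd_K - \fd_{I_0}, e^i_{I_0}\rangle$ and $\beta^i_L := \langle \gd_L - \gd_{I_0}, e^i_{I_0}\rangle$, the double sum factorises as
\[
\sum_{K,L \in \mathcal{D}_k(I_0)}|\lambda^i_{KL}| = \frac{1}{4^k}\Big(\sum_K |\alpha^i_K|\Big)\Big(\sum_L |\beta^i_L|\Big).
\]
Since $\sum_K \alpha^i_K = 0 = \sum_L \beta^i_L$, I can select real coefficients $\{s^i_K\}, \{t^i_L\} \subset [-1,1]$ with $\sum_K s^i_K = 0 = \sum_L t^i_L$ such that, up to a universal constant (obtained by averaging over a complex phase),
\[
\Big|\sum_K s^i_K \alpha^i_K\Big| \gtrsim \sum_K |\alpha^i_K|, \qquad \Big|\sum_L t^i_L \beta^i_L\Big| \gtrsim \sum_L |\beta^i_L|.
\]
The zero-sum constraint is imposed so that the numbers $w^{i,\pm}_K := 2^{-k}(1 \pm s^i_K)$ (and the analogous weights involving $t^i_L$) are non-negative and sum to $1$, defining genuine convex combinations of the endpoint tuples.

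Next, set $B^i_\pm := \sum_K w^{i,\pm}_K A_K$ (with the $\gd$-components governed by the $t^i_L$), so that $A_{I_0} = (B^i_+ + B^i_-)/2$ by construction. Applying property (iii) of $\fb_{X'}$ to this midpoint splitting with the test matrix $\tau = P^i_{I_0}$, which commutes with $\ud_{I_0}$ and therefore satisfies $\|\tau\|_{\ud_{I_0}} = 1$, yields
\[
\fb_{X'}(A_{I_0}) \ge \tfrac{1}{2}\bigl(\fb_{X'}(B^i_+) + \fb_{X'}(B^i_-)\bigr) + \bigl|\langle P^i_{I_0}(\fd_{B^i_+} - \fd_{B^i_-}),\, \gd_{B^i_+} - \gd_{B^i_-}\rangle\bigr|.
\]
Since $\fd_{B^i_+} - \fd_{B^i_-} = 2^{-(k-1)} \sum_K s^i_K \fd_K$ and similarly for $\gd$, the last term is a constant multiple of $\sum_{K,L}|\lambda^i_{KL}|$. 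The values $\fb_{X'}(B^i_\pm)$ are in turn bounded below by iterating midpoint concavity along any binary refinement of $B^i_\pm$ down to the leaves $\{A_K\}$ and discarding the non-negative lower-level increment terms, which gives $\tfrac{1}{2}(\fb_{X'}(B^i_+) + \fb_{X'}(B^i_-)) \ge 2^{-k}\sum_K \fb_{X'}(A_K)$. Summing the resulting inequality over $i = 1,\ldots,d$ produces the factor $d$.

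The main obstacle is that the domain of the Bellman function is \emph{not} convex in the matrix variables $(\ud,\vd)$, so one cannot freely form convex combinations of tuples in $\mathfrak{D}_X$ and expect to remain in $\mathfrak{D}_X$. This is precisely what Lemma \ref{extdom} is designed to remedy: the line segment between two tuples whose midpoint lies in $\mathfrak{D}_X$ is contained in the enlarged domain $\mathfrak{D}_{4X}$. A careful bookkeeping of the enlargement of the $A_2$-constant along the entire modified binary tree --- using the fact that the $B^i_\pm$ only perturb the natural dyadic averages $A_I \in \mathfrak{D}_X$ by a controlled amount --- yields the specific constant $X' = 100X/9$. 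Keeping $X'$ of order $X$ rather than growing exponentially in $k$ is the most delicate point and is precisely where the \emph{modification} of the standard dyadic tree is essential; in the scalar case the analogous domain is convex, so no enlargement step is required and this difficulty does not arise.
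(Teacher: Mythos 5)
Your decomposition of $\lambda^i_{KL}$ as a rank-one matrix is correct, and the overall skeleton (linearise the double sum, form a perturbed midpoint split at $I_0$, apply property (iii) with $\tau = P^i_{I_0}$, iterate concavity down to level $k$, handle the non-convexity via an enlargement of $X$) is the right one. However, there is a genuine gap at the crucial linearisation step. You choose \emph{two independent} real sequences $\{s^i_K\}$ and $\{t^i_L\}$ to make $\sum_K s^i_K m^i_K$ and $\sum_L t^i_L n^i_L$ large, and then you try to form tuples $B^i_\pm$ "with the $\gd$-components governed by the $t^i_L$" while the $\fd$-components are governed by $s^i_K$. But property (iii) only applies to a midpoint split $A_{I_0} = (A_+ + A_-)/2$ of \emph{tuples} in $\mathfrak{D}_{X'}$: all six components of $A_\pm$ must be averaged with the same convex weights, since otherwise the compatibility constraints $\|\vd_\pm^{-1/2}\fd_\pm\|^2 \le \Fd_\pm$ and $\|\ud_\pm^{-1/2}\gd_\pm\|^2 \le \Gd_\pm$ have no reason to hold. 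Averaging $\fd$ with $s$-weights and $\gd$ with $t$-weights does not produce a genuine element of the domain, so the concavity condition cannot be invoked. This is precisely the difficulty the paper resolves by invoking K.~Ball's ``multiple Hahn--Banach Theorem'' (or the elementary substitute in Treil's Theorem~6.2) to find a \emph{single} real sequence $\{\alpha_I\}$ with $|\alpha_I| \le 1/4$ and $\sum \alpha_I = 0$ that is simultaneously effective against both $(m^1_K)$ and $(n^1_L)$ (the dominant real/imaginary parts). Without such a simultaneous choice, your construction does not close.

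A secondary, but also fatal, issue is the bound $|s^i_K| \le 1$: the weights $a^\pm_I = 1 \pm \alpha_I$ must remain bounded away from $0$ and $\infty$, because the intermediate martingale positions $A^\pm_I$ along the tree are convex combinations with weight ratio at most $\max a / \min a$, and it is this ratio (squared, then times $4$ from Lemma~\ref{extdom}) that produces the enlargement $X \mapsto X'$. With $|\alpha_I| \le 1/4$ one gets $a^\pm_I \in [3/4, 5/4]$, ratio $5/3$, hence $(5/3)^2 X = 25X/9$ by Lemma~\ref{lemm:convex}, and then $4 \cdot 25X/9 = 100X/9$; but with $|s^i_K|$ allowed up to $1$ the weights can vanish and the domain enlargement is uncontrolled, so $X'$ cannot be bounded by a fixed multiple of $X$. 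Your concluding paragraph asserts that ``careful bookkeeping\dots yields the specific constant $X' = 100X/9$'' but the bookkeeping simply cannot be carried out from your starting data.
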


\begin{proof}

For \(1 \leq i \leq d\), we introduce the notation
\(\Lambda^i := \left (\lambda_{KL}^i \right )_{K,L \in \mathcal{D}_k(I_0)}.\) Assume for the moment that for each \(i\), we can find a sequence \(\{\alpha^i_I\}_{I \in \mathcal{D}_k(I_0)}\) such that \(|\alpha^i_I| \leq 1/4\) for all \(I \in \mathcal{D}_k(I_0), \sum_{I \in \mathcal{D}_k(I_0)} \alpha^i_I =0,\) and
\begin{equation}\label{sequence}
\bigg | \sum_{K,L \in \mathcal{D}_k({I_0})} \alpha^i_K \alpha^i_L \lambda_{KL}^i \bigg | \geq c \sum_{K,L \in \mathcal{D}_k({I_0})} |\lambda_{KL}^i|.
\end{equation}

For each \(i\), we define \(A_{I_0}^{i, \pm}=(\fd^{i, \pm}, \Fd^{i, \pm}, \ud^{i, \pm}, \gd^{i, \pm}, \Gd^{i, \pm}, \vd^{i, \pm})\) by
\begin{equation}   \label{eq:dyn}
A_{I_0}^{i, \pm}:=2^{-k} \sum_{I \in \mathcal{D}_k(I_0)} (1 \pm \alpha^i_I)A_I = A_{I_0} \pm 2^{-k} \sum_{I \in \mathcal{D}_k(I_0)} \alpha^i_I A_I,
\end{equation}
so \(A_{I_0}=(A_{I_0}^{i, +} + A_{I_0}^{i, -})/2\). 

The following notations and computations hold for every \(1 \leq i \leq d\), so we fix such an \(i\). For simplicity, we also drop the \(i\) superscript until further notice. 

For each \(I \in \mathcal{D}_k(I_0)\), let \(a_I^{\pm}:=1 \pm \alpha_I\) and note that \(3/4 \leq a_I^{\pm} \leq 5/4.\)

For \(I \in \mathcal{D}_n(I_0),\ 1 \leq n \leq k,\) let us define
\[A_I^{\pm}:= \left (
\sum_{\substack{
            J \in \mathcal{D}_k(I_0) \\
            J \subseteq I}}
a_J^{\pm}A_J \right ) \left (
\sum_{\substack{
            J \in \mathcal{D}_k(I_0) \\
            J \subseteq I}}
a_J^{\pm} \right )^{-1}.\]
If \(I \in \mathcal{D}_k(I_0)\) we have \(A_I^+=A_I^-=A_I\), where the \(A_I\)'s are the points from the statement of the lemma. The points \(A_I^{\pm}\) are in the convex hull of the set \(\{A_J: J \in \mathcal{D}_k(I_0), \ J \subseteq I\}\). To address the lack of convexity of $\mathfrak{D}_X$, we need an additional lemma:

\begin{lemma}  \label{lemm:convex}
 \(A_I^{\pm} \in \mathfrak{D}_{25/9X}\) for all \(I \in \mathcal{D}_n(I_0),\ 1 \leq n \leq k\).
\end{lemma}
\begin{proof} of Lemma \ref{lemm:convex}.
Since the points \(A_I^{\pm}\) are in the convex hull of the set \(\{A_J \in \mathfrak{D}_X \subset \mathfrak{D}_{25/9X}\}\), and among the conditions that define \(\mathfrak{D}_{25/9X}\) only the constraint \(\vd^{1/2} \ud \vd^{1/2} \leq \frac{25}{9}X \cdot I_d\) is not convex, we just have to check this condition. 

Let us consider the \(\ud\)-coordinate of the points \(A_I\). The maximal numerator is obtained when all coefficients \(a_J^{\pm}\) are equal to \(5/4\), and the minimal denominator is attained when \(a_J^{\pm}=3/4\) for all \(J \in \mathcal{D}_k(I_0), \ J \subseteq I\). This implies that \(\ud_I^{\pm} \leq \frac{5}{4} (\frac{3}{4})^{-1} \ud_I = \frac{5}{3} \ud_I\). Similarly, we also have \(\vd_I^{\pm} \leq \frac{5}{3} \vd_I\). Using elementary properties of positive definite matrices, it follows that 
\[(\vd_I^{\pm})^{1/2} \ud_I^{\pm} (\vd_I^{\pm})^{1/2} \leq \frac{5}{3} (\vd_I^{\pm})^{1/2} \ud_I (\vd_I^{\pm})^{1/2}\]
and 
\begin{align*}
\big \|(\vd_I^{\pm})^{1/2} \ud_I^{\pm} (\vd_I^{\pm})^{1/2} \big \| & \leq \frac{5}{3} \big \|(\vd_I^{\pm})^{1/2} \ud_I (\vd_I^{\pm})^{1/2} \big \|  = \frac{5}{3} \big \|\ud_I^{1/2} \vd_I^{\pm} \ud_I^{1/2} \big \| \\
& \leq \left( \frac{5}{3} \right)^2 \big \|\ud_I^{1/2} \vd_I \ud_I^{1/2} \big \| \leq \frac{25}{9}X, 
\end{align*}
hence \((\vd_I^{\pm})^{1/2} \ud_I^{\pm} (\vd_I^{\pm})^{1/2} \leq \frac{25}{9}X \cdot I_d\). This means that the points \(A_I^{\pm}\) belong to \(\mathfrak{D}_{25/9X}\).

Let \(\tilde{A}^\pm_{I}\) be the midpoints of the line segments with endpoints \(A_{I^+}^{\pm}\) and \(A_{I^-}^{\pm}\). We prove that \(\tilde{A}^\pm_{I} \in \mathfrak{D}_{25/9X}\).

As before, we have \(\ud_{I^{\pm}}^{\pm} \leq \frac{5}{3} \ud_{I^{\pm}}\) and \(\vd_{I^{\pm}}^{\pm} \leq \frac{5}{3} \vd_{I^{\pm}}\). Therefore,
\[\tilde{\ud}_I^{\pm} = \frac{\ud_{I^+}^{\pm} + \ud_{I^-}^{\pm}}{2} \leq \frac{5}{3} \frac{\ud_{I^+} +\ud_{I^-}}{2} = \frac{5}{3} \ud_I\]
and \(\tilde{\vd}_I^{\pm} \leq \frac{5}{3} \vd_I\). It follows that \((\tilde{\vd}_I^{\pm})^{1/2} \tilde{\ud}_I^{\pm} (\tilde{\vd}_I^{\pm})^{1/2} \leq \frac{25}{9}X \cdot I_d\), so the points \(\tilde{A}^\pm_{I}\) belong to \(\mathfrak{D}_{25/9X}\).

Applying Lemma \ref{extdom}, we conclude that the line segments with endpoints \(A_{I^+}^{\pm}\) and \(A_{I^-}^{\pm}\) are in \(\mathfrak{D}_{X'}\), where \(X'=4 \frac{25}{9} X = \frac{100}{9} X\).
This finishes the proof of Lemma \ref{lemm:convex}.
\end{proof} 

We continue with the proof of Lemma \ref{mainlemma}.
For \(I \in \mathcal{D}_n(I_0),\ 1 \leq n \leq k,\) we define
\[\theta_I^{\pm}:= \left (
\sum_{\substack{
            J \in \mathcal{D}_k(I_0) \\
            J \subseteq I}}
a_J^{\pm} \right ) \left (
\sum_{\substack{
            J \in \mathcal{D}_k(I_0) \\
            J \subseteq \tilde{I}}}
a_J^{\pm} \right )^{-1}.\]

\noindent
It is easy to see that \(3/10 \leq \theta_I^{\pm} \leq 5/6\) and
\begin{equation}\label{convexity}
\theta_{I^+}^{\pm} + \theta_{I^-}^{\pm} =1, \qquad A_I^{\pm}=\theta_{I^+}^{\pm} A_{I^+}^{\pm} + \theta_{I^-}^{\pm} A_{I^-}^{\pm}.
\end{equation}

\noindent
The last equality means that the point \(A_I^+\) is on the line segment with endpoints \(A_{I^+}^+\) and \(A_{I^-}^+\), and similarly for \(A_I^-\). \(\theta_{I^+}^{\pm}\) and \(\theta_{I^-}^{\pm}\) represent the probabilities of moving from the points \(A_I^{\pm}\) to \(A_{I^+}^{\pm}\) and \(A_{I^-}^{\pm}\), respectively.

Since by (\ref{eq:dyn})
\begin{align*}
\Big | \big \langle P_{I_0} \big (\fd^+ - \fd^- \big ), P_{I_0} \big (\gd^+ - \gd^- \big ) \big \rangle_{\mathbb{C}^d}  \Big | & = \bigg| \bigg \langle 2 \cdot 2^{-k} \sum_{I \in \mathcal{D}_k(I_0)} \alpha_I P_{I_0}(\fd_I), 2 \cdot 2^{-k} \sum_{I \in \mathcal{D}_k(I_0)} \alpha_I P_{I_0}(\gd_I) \bigg \rangle_{\mathbb{C}^d}  \bigg| \\
& = 4 \Big | \big \langle P_{I_0} \big (\fd^{\pm} - \fd_{I_0} \big) , P_{I_0} \big(\gd^{\pm} - \gd_{I_0} \big) \big \rangle_{\mathbb{C}^d} \Big |,
\end{align*}
we get by property (iii) of the Bellman function \(\fb_{X'}\)
\begin{multline}\label{bellmandif}
\big | \left \langle P_{I_0} \big (\fd^{\pm} - \fd_{I_0} \big) , P_{I_0} \big(\gd^{\pm} - \gd_{I_0} \big) \right \rangle_{\mathbb{C}^d} \big | 
   \leq \sup_{\| \tau\|_{\ud_{I_0} } \le 1 } \left| \left \langle  \tau (\fd^{\pm} - \fd_{I_0} ), (\gd^{\pm} - \gd_{I_0} ) \right \rangle_{\mathbb{C}^d}  \right| \\
   \leq \frac{1}{4} \bigg ( \fb_{X'}(A_{I_0}) - \frac{\fb_{X'}(A_{I_0}^+)+\fb_{X'}(A_{I_0}^-)}{2} \bigg ).
\end{multline}

From the concavity of the function \(\fb_{X'}\) and \eqref{convexity} it follows that
\[\fb_{X'}(A_I^{\pm}) \geq \theta_{I^+}^{\pm} \fb_{X'}(A_{I^+}^{\pm}) + \theta_{I^-}^{\pm} \fb_{X'}(A_{I^-}^{\pm}).\]

Applying now this inequality to \(I \in \mathcal{D}_n(I_0),\ 0 \leq n \leq k-1\), and taking into account that
\[\prod_{\substack{
            J \in \mathcal{D} \\
            I \subseteq J \subsetneq I_0 }}
\theta_J^{\pm} = a_I^{\pm} \bigg (\sum_{J \in \mathcal{D}_k(I_0)} a_J^{\pm} \bigg )^{-1} = 2^{-k} a_I^{\pm}\]
for all \(I \in \mathcal{D}_k(I_0),\) we obtain the estimate
\[\fb_{X'}(A_{I_0}^{\pm}) \geq 2^{-k} \sum_{I \in \mathcal{D}_k(I_0)} a_I^{\pm} \fb_{X'}(A_I).\]

Since \(a_I^+ + a_I^-=2\) when \(I \in \mathcal{D}_k(I_0),\) substituting the previous inequality in \eqref{bellmandif} gives
\begin{equation}\label{single_est}
\Big | \big \langle P_{I_0} \big (\fd^{\pm} - \fd_{I_0} \big) , P_{I_0} \big(\gd^{\pm} - \gd_{I_0} \big) \big \rangle_{\mathbb{C}^d}\Big | \leq \frac{1}{4} \bigg ( \fb_{X'}(A_{I_0}) - 2^{-k} \sum_{I \in \mathcal{D}_k(I_0)}  \fb_{X'}(A_I) \bigg ) .
\end{equation}

We are now ready to obtain the conclusion of the lemma. By \eqref{sequence} and the fact that \\ \(\sum_{I \in \mathcal{D}_k(I_0)} \alpha^i_I =0,\) we have the estimate
\begin{align}\label{est1}
c \sum_{i = 1}^d \sum_{K,L \in \mathcal{D}_k({I_0})} \left| \lambda_{KL}^i \right| & \leq \sum_{i = 1}^d \bigg | \sum_{K,L \in \mathcal{D}_k({I_0})} \alpha^i_K \alpha^i_L \lambda_{KL}^i \bigg | \\
& = \sum_{i = 1}^d  \bigg | \sum_{K,L \in \mathcal{D}_k({I_0})} \alpha^i_K \alpha^i_L \bigg \langle P_{I_0}^i \bigg ( \frac{\fd_K - \fd_{I_0}}{2^k} \bigg ), P_{I_0}^i \bigg ( \frac{\gd_L - \gd_{I_0}}{2^k} \bigg ) \bigg \rangle_{\mathbb{C}^d}  \bigg | \nonumber \\
& = \bigg | \sum_{i = 1}^d \bigg \langle 2^{-k} \sum_{I \in \mathcal{D}_k(I_0)} \alpha^i_I P_{I_0}^i (\fd_I - \fd_{I_0}),  2^{-k} \sum_{I \in \mathcal{D}_k(I_0)} \alpha^i_I P_{I_0}^i (\gd_I - \gd_{I_0}) \bigg \rangle_{\mathbb{C}^d} \bigg | \nonumber \\
& = \sum_{i = 1}^d \Big | \big \langle P_{I_0}^i \big( \fd^{i, \pm} - \fd_{I_0} \big) , P_{I_0}^i \big(\gd^{i, \pm} - \gd_{I_0} \big) \big \rangle_{\mathbb{C}^d} \Big | \nonumber \\
& \leq \frac{d}{4} \bigg ( \fb_{X'}(A_{I_0}) - 2^{-k} \sum_{I \in \mathcal{D}_k(I_0)}  \fb_{X'}(A_I) \bigg ), \nonumber
\end{align}
where the last inequality follows from \eqref{single_est}. This completes the proof of Lemma \ref{mainlemma} under the assumption \eqref{sequence}. 

For each \(1 \leq i \leq d\), the matrix \(\Lambda^i\) has complex rank 1. Dropping again the \(i\) superscript, there exist \(m = (m_K), n = (n_L) \in \mathbb{C}^{2^k}\) such that  \(\lambda_{KL} = m_K n_L = (m^1_K + i m^2_K) (n^1_L + i n^2_L)\), for every \(K,L \in \mathcal{D}_k(I_0)\).  We then have 
\[ \sum_{K,L \in \mathcal{D}_k(I_0)} |\lambda_{KL}| \leq \sum_{K,L \in \mathcal{D}_k(I_0)} \big( |m^1_K n^1_L| + |m^1_K n^2_L| + |m^2_K n^1_L| + |m^2_K n^2_L| \big).\]
Without loss of generality, we may assume that \(\sum_{K,L \in \mathcal{D}_k(I_0)} |m^1_K n^1_L|\) is the maximum of the four sums in the above right hand side. By an application of K. Ball's ``multiple Hahn-Banach Theorem'' (\cite{Ba91}, Theorem 7), or alternatively an elementary functional analysis argument (see \cite{Tr11}, Theorem 6.2 and Lemma 6.3), we can find a real-valued sequence \(\{\alpha_I\}_{I \in \mathcal{D}_k(I_0)}\) such that \(|\alpha_I| \leq 1/4\) for all \(I \in \mathcal{D}_k(I_0), \sum_{I \in \mathcal{D}_k(I_0)} \alpha_I =0,\) and 
\[\bigg| \sum_{K \in \mathcal{D}_k(I_0)} \alpha_K m^1_K \bigg| \geq \frac{1}{16} \sum_{K \in \mathcal{D}_k(I_0)} |m^1_K| , \qquad \bigg| \sum_{L \in \mathcal{D}_k(I_0)} \alpha_L n^1_L \bigg| \geq \frac{1}{16} \sum_{L \in \mathcal{D}_k(I_0)} |n^1_L|.\]

It follows that 
\begin{align*}
\sum_{K,L \in \mathcal{D}_k(I_0)} |\lambda_{KL}| & \leq 4 \sum_{K,L \in \mathcal{D}_k(I_0)} |m^1_K n^1_L| \leq 4^5 \bigg| \sum_{K \in \mathcal{D}_k(I_0)} \alpha_K m^1_K \bigg| \cdot \bigg| \sum_{L \in \mathcal{D}_k(I_0)} \alpha_L n^1_L \bigg| \nonumber \\
& \leq 4^5 \bigg| \sum_{K \in \mathcal{D}_k(I_0)} \alpha_K (m^1_K + i m^2_K) \bigg| \cdot \bigg| \sum_{L \in \mathcal{D}_k(I_0)} \alpha_L (n^1_L + i n^2_L) \bigg| \nonumber \\
& = 4^5 \bigg| \sum_{K \in \mathcal{D}_k(I_0)} \alpha_K m_K \bigg| \cdot \bigg| \sum_{L \in \mathcal{D}_k(I_0)} \alpha_L n_L \bigg| =  4^5 \bigg| \sum_{K, L \in \mathcal{D}_k(I_0)} \alpha_K \alpha_L \lambda_{KL} \bigg|, \nonumber
\end{align*}
which is what we wanted to show. Therefore, the proof of the lemma is complete, with $c=4^5$.
\end{proof}

\section{Conclusion of the proof of Theorem \ref{mainshift}}
We are now ready to finish the proof of Theorem \ref{mainshift}.

Recall that for all slices \(S_j\) of \(S\) we have
\[ \Big | \left \langle S_j f,g \right \rangle_{\ltw,\ltwi} \Big|  \leq \sum_{L \in  \mathcal{L}_j} |L| \sum_{i = 1}^d 
\sum_{P,Q \in \mathcal{D}_k(L)} \bigg | \bigg \langle P_L^i \bigg (  \frac{\langle f \rangle _P -  \langle f \rangle _L}{2^k} \bigg ), P_L^i \bigg (\frac{\langle g \rangle _Q -  \langle g \rangle _L}{2^k}  \bigg )  \bigg \rangle_{\mathbb{C}^d}  \bigg|. \]

Let \(X:= [W]_{A_2}\); fix \(0 \leq j \leq k-1 \) and for all \(I \in \mathcal{L}_j\) define
\[A_I:=\Big (\langle f \rangle _I, \big \langle \|W^{1/2}f\|^2_{\mathbb{C}^d}  \big \rangle_I, \langle W \rangle _I, \langle g \rangle _I, \big \langle \|W^{-1/2}g\|^2_{\mathbb{C}^d} \big \rangle_I, \langle W^{-1} \rangle _I \Big).\]
Notice that all these points are in \(\mathrm{Dom}\, \fb_X=\mathfrak{D}_X\). Lemma \ref{mainlemma} says that
\begin{align*}
& |L| \sum_{i = 1}^d 
\sum_{P,Q \in \mathcal{D}_k(L)} \bigg | \bigg \langle P_L^i \bigg (  \frac{\langle f \rangle _P -  \langle f \rangle _L}{2^k} \bigg ), P_L^i \bigg (\frac{\langle g \rangle _Q -  \langle g \rangle _L}{2^k}  \bigg )  \bigg \rangle_{\mathbb{C}^d}  \bigg| \\
& \qquad \qquad \leq c \cdot d \bigg( |L| \fb_{X'}(A_L) - \sum_{I \in \mathcal{D}_k(L)} |I| \fb_{X'}(A_I) \bigg ),
\end{align*}
for all \(L \in \mathcal{L}_j.\) We write this estimate for each \(I \in \mathcal{D}_k(L)\) and then iterate the procedure \(\ell\) times to obtain
\begin{align*}
&  \sum_{\substack{
                    I \in \mathcal{L}_j \\
                    I \subseteq L\\
                    |I| > 2^{-k \ell}|L| }}
|I| \sum_{i = 1}^d \sum_{P,Q \in \mathcal{D}_k(I)} \bigg | \bigg \langle  P_I^i \bigg ( \frac{\langle f \rangle _P -  \langle f \rangle _I}{2^k} \bigg), P_I^i \bigg ( \frac{\langle g \rangle _Q -  \langle g \rangle _I}{2^k} \bigg)    \bigg \rangle_{\mathbb{C}^d} \bigg|  \\
& \qquad \qquad \qquad \leq c \cdot d  \bigg( |L| \fb_{X'}(A_L) - \sum_{I \in \mathcal{D}_{k \ell}(L)} |I| \fb_{X'}(A_I) \bigg ) \\
& \qquad \qquad \qquad \leq c \cdot d   |L| \fb_{X'}(A_L)  \\
& \qquad \qquad \qquad \leq c \cdot d N(X') |L| \big \langle \|W^{1/2}f\|^2_{\mathbb{C}^d}  \big \rangle_L^{1/2} \big \langle \|W^{-1/2}g\|^2_{\mathbb{C}^d}  \big \rangle_L^{1/2} \\
& \qquad \qquad \qquad \leq c \cdot d N(X') \|f \chi_L\|_{\ltw}  \|g \chi_L\|_{\ltwi},
\end{align*}
where the second inequality follows from property (ii) of the Bellman function.
\par Letting \(\ell \to \infty\), we have
\begin{align*}
&  \sum_{\substack{
                    I \in \mathcal{L}_j \\
                    I \subseteq L }}
|I| \sum_{i = 1}^d \sum_{P,Q \in \mathcal{D}_k(I)} \bigg | \bigg \langle  P_I^i \bigg ( \frac{\langle f \rangle _P -  \langle f \rangle _I}{2^k} \bigg), P_I^i \bigg ( \frac{\langle g \rangle _Q -  \langle g \rangle _I}{2^k} \bigg)    \bigg \rangle_{\mathbb{C}^d} \bigg|  \\
& \qquad \qquad \qquad \leq c \cdot d N(X') \|f \chi_L\|_{\ltw}  \|g \chi_L\|_{\ltwi}.
\end{align*}

We now cover the real line with intervals \(L \in \mathcal{L}_j\) of length \(2^M\) and apply the last inequality to each \(L\) to obtain that
\begin{align*}
&  \sum_{\substack{
                    I \in \mathcal{L}_j \\
                    |I| \leq 2^M }}
|I| \sum_{i = 1}^d \sum_{P,Q \in \mathcal{D}_k(I)} \bigg | \bigg \langle  P_I^i \bigg ( \frac{\langle f \rangle _P -  \langle f \rangle _I}{2^k} \bigg), P_I^i \bigg ( \frac{\langle g \rangle _Q -  \langle g \rangle _I}{2^k} \bigg)    \bigg \rangle_{\mathbb{C}^d} \bigg|  \\
& \qquad \qquad \qquad \leq c \cdot d N(X) \|f\|_{\ltw}  \|g\|_{\ltwi}.
\end{align*}

For \(M \to \infty\), we get that the norm of \(S_j\) is bounded by \(c \cdot d N(X)\). Since \(S\) was decomposed into \(k\) slices, it follows that the operator norm of \(S\) is bounded by \(c \cdot k d N([W]_{A_2^d})\), and therefore the proof of Theorem \ref{mainshift} is complete.
\qed

Using the bound for matrix-weighted dyadic martingale transforms proved in \cite{BiPeWi14} and the bound for matrix-weighted paraproducts in \cite{IHP}, page 7, together with Hyt\"onen's representation theorem in \cite{Hy11}, we obtain
the following consequence of Theorem \ref{mainthm}:
\begin{theorem}
Let \(W\) be a \(d \times d\) matrix \(A_2\) weight on \(\mathbb{R}^p\). 
Let \(K\) be a standard kernel and \(T\) be a Calder\'{o}n-Zygmund operator on \(\mathbb{R}^p\) associated to \(K\). Suppose that \(T\) satisfies the weak boundedness property \(|\langle T \chi_Q, \chi_Q \rangle |
\leq C|Q|\) for all cubes \(Q\). Then
\[\|T\|_{\ltw \to \ltw} \leq C \cdot 2^{3(p-1)} p ([W]_{A_2})^{3/2}  (2(p-1) +\log ([W]_{A_2}) )   ,\]
where \(C\) depends only on the constants in the standard estimates and the weak boundedness property, and the dimension $d$.
\end{theorem}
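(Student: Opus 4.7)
The plan is to combine Theorem \ref{mainthm} with the explicit bounds for matrix-weighted martingale transforms from \cite{BiPeWi14} and for matrix-weighted paraproducts from \cite{IHP}, after splitting off the non-cancellative part of $T$ so that the hypothesis $T_c(1)=T_c^*(1)=0$ of Theorem \ref{mainthm} is met.

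First I set $b_1 := T(1)$ and $b_2 := T^*(1)$. Standard Calder\'on--Zygmund theory, combined with the weak boundedness property, implies that $b_1, b_2 \in \mathrm{BMO}$ with norms controlled by the kernel constants and the weak boundedness constant. Letting $\Pi_{b_1}, \Pi_{b_2}$ denote the associated dyadic paraproducts, the operator $T_c := T - \Pi_{b_1} - \Pi_{b_2}^*$ is a Calder\'on--Zygmund operator with comparable constants which by construction satisfies $T_c(1)=T_c^*(1)=0$. Equivalently, one may invoke the full version of Hyt\"onen's representation theorem in \cite{Hy11}, which augments the expansion of Theorem \ref{repr} by paraproduct terms built from $T1$ and $T^*1$.

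Next I apply Theorem \ref{mainthm} to $T_c$ to obtain $\|T_c\|_{\ltw \to \ltw} \leq C\, p d\, N(2^{2(p-1)}[W]_{A_2})$. Substituting the explicit estimate $N(X) \lesssim (\log X)\, X^{3/2}$ from \eqref{cubicbound} and bounding $\log(2^{2(p-1)}[W]_{A_2}) = 2(p-1)\log 2 + \log[W]_{A_2} \lesssim 2(p-1) + \log[W]_{A_2}$ yields, after absorbing the dimensional factor $d$ into the constant, the target bound
\[
\|T_c\|_{\ltw \to \ltw} \leq C\cdot 2^{3(p-1)}\, p\, \bigl(2(p-1) + \log[W]_{A_2}\bigr)\, [W]_{A_2}^{3/2}.
\]
For the remaining pieces $\Pi_{b_1}$ and $\Pi_{b_2}^*$, I invoke the matrix-weighted paraproduct estimate of \cite{IHP}, page 7, which provides a bound essentially of the form $C\|b\|_{\mathrm{BMO}}\,[W]_{A_2}^{3/2}$ (with at most a logarithmic loss). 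Since the BMO norms of $b_1$ and $b_2$ are controlled by the Calder\'on--Zygmund and weak boundedness constants, these contributions are dominated by the estimate for $T_c$.

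The only real obstacle here is bookkeeping: one must check that the precise form of the paraproduct bound in \cite{IHP} combines with the estimate for $T_c$ without generating extra factors of $[W]_{A_2}$ or $\log[W]_{A_2}$ beyond those already present, so that the total retains the asserted shape $C\cdot 2^{3(p-1)}\, p\, [W]_{A_2}^{3/2}\,\bigl(2(p-1)+\log[W]_{A_2}\bigr)$. No new analytic input is needed; the argument is purely a consolidation of Theorem \ref{mainthm} with the cited bounds.
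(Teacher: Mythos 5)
Your proposal is correct and follows essentially the same route as the paper: apply Theorem \ref{mainthm} for the cancellative part, substitute the explicit bound $N(X) \lesssim (\log X)\,X^{3/2}$ from \cite{BiPeWi14}, and control the paraproduct terms arising in Hyt\"onen's full representation theorem via the matrix-weighted paraproduct estimate from \cite{IHP}. The one minor imprecision is that literally subtracting a single pair of dyadic paraproducts $\Pi_{b_1},\Pi_{b_2}^*$ from $T$ does not yield a Calder\'on--Zygmund operator in the sense of Theorem \ref{mainthm} (dyadic paraproducts are grid-dependent and lack a smooth kernel), but, as you note, invoking the full representation theorem with its averaged paraproduct components is the rigorous version of the same argument.
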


\begin{remark} \label{rmk1}
\normalfont
Obviously, we have not used the full power of the Bellman function here - the supremum in the Bellman function is taken over all $\tau$ with $\|\mathbf{U}^{1/2} \tau \mathbf{U}^{-1/2}\|\le1$, while we have only used the projections on the eigenspaces of $\mathbf{U}$. The setup actually allows to treat matrix-valued kernels as well, using the recent representation theorem for Calder\'{o}n-Zygmund operators with operator-valued kernels in \cite{HaHy14}, which again gives a decomposition into dyadic shifts. However, in the matrix-weighted setting, one needs to adapt the decay conditions on the Calder\'{o}n-Zygmund operator to the matrix weight $W$  (see \cite{Is15}, page 3). This approach is the subject of the paper \cite{PoSt17}.
\end{remark}

\begin{remark} \label{rmk2} 
\normalfont
Following Remark \ref{rmk1}, we could also have used a smaller version of the function $N$ by choosing a smaller class of martingale transforms for our proof, namely for example
$$
N_1(X) =  \sup\ \|T_{\sigma}\|_{\ltw \to \ltw},
$$
where the supremum is taken over all  $ d \times d$  matrix $A_2$ weights $W$ with $ [W]_{A_2} \le X$ and all sequences of $d \times d$ matrices
 $\sigma = \{\sigma_I\}_{I \in \mathcal{D}}$  with $\|\sigma_I\| \le 1$ and $ \sigma_I$ commuting with $\langle W \rangle_I $ for all $ I \in \mathcal{D}$.
 
One can then define the Bellman function with the projections $P_{I}^i$ from Lemma \ref{mainlemma} instead of the $\tau$, running exactly the same proof.
The reason we used the more general class of martingale transforms is that for both classes of $\sigma$'s, we have the pointwise estimate
$$
S_W(T_\sigma f)(t) \le S_W f(t),
$$
where $S_W$ is the matrix-weighted square function (see \cite{BiPeWi14}, \cite{PePo02}). Our expectation here was that the norm growth of the matrix-weighted square function controls the norm growth of the matrix-weighted martingale transforms in terms of $[W]_{A_2}$, and that both bounds are linear in $[W]_{A_2}$. This would, by Theorem \ref{mainthm},  
imply the linear bound in $[W]_{A_2}$ for general Calder\'{o}n-Zygmund operators with cancellation. Indeed, the linear bound of the matrix-weighted square function has  been proved after this paper was 
refereed \cite{hpv}. The linear bound for martingale transforms remains currently open. An account on possible strategies and some of the obstacles can be found in Section 6 of \cite{BiPeWi14}.
\end{remark}

\section{More about Calderón-Zygmund operators with even kernel}
One of the key aspects of the definition of the martingale transform operator in Section 2.2 is that the matrices \(\sigma_I\) interact well with the weight \(W\) (for the proof of our main result, we have essentially used the special case where 
the \(\sigma_I\)'s are diagonal in some basis). 

In the scalar-valued case, the definition of the martingale transform is simpler. More precisely, for a real sequence \(\sigma=\{\sigma_I\}_{I \in \dd},\ \sigma_I= \pm 1,\) we define the martingale transform operator \(\tilde{T}_{\sigma}\) by
\[\tilde{T}_\sigma f = \sd \sigma_I  \langle f,h_I \rangle h_I.\]
Allowing this operator to act on vector-valued functions, we can prove a similar result to Theorem \ref{mainthm}, but this time, the bound will only apply to Calder\'{o}n-Zygmund operator
with even kernels and sufficient smoothness of the kernel. For this, we define the function \(\tilde{N}:[1, \infty) \to [1, \infty)\) by
\[ \tilde{N}(X)=  \sup \|\tilde{T}_{\sigma}\|_{\ltw \to \ltw}, \]
where the supremum is taken over all real sequences \(\sigma\) as above and all \(d \times d\) matrix \(A_2^d\) weights \(W\) on \(\mathbb{R}\) with \([W]_{A_2^d} \leq X\).

\begin{theorem}\label{mainthm_even}
Let \(W\) be a \(d \times d\) matrix \(A_2\) weight on \(\mathbb{R}^p\). Let \(\tilde{K}\) be an even standard kernel with smoothness \(\delta > 1/2\) and \(T\) be a Calder\'{o}n-Zygmund operator on \(\mathbb{R}^p\) associated to \(\tilde{K}\). Suppose that \(T\) satisfies the weak boundedness property \(|\langle T \chi_Q, \chi_Q \rangle | \leq C|Q|\) for all cubes \(Q\), and the vanishing paraproduct conditions \(T(1)=T^*(1)=0\). Then
\[\|T\|_{\ltw \to \ltw} \leq C \cdot  pd \tilde{N}(2^{2(p-1)} [W]_{A_2}) \leq C_p \cdot  d \tilde{N}([W]_{A_2}),\]
where \(C\) depends only on the constants in the standard estimates and the weak boundedness property, while \(C_p\) depends on \(C\) and \(p\).
\end{theorem}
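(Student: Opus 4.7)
The plan is to mirror the proof of Theorem \ref{mainthm}, exploiting the symmetry of the kernel to replace matrix martingale transforms by scalar $\pm 1$ ones throughout. Three modifications are required.

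First, in place of Theorem \ref{repr}, I would invoke a representation theorem for $T$ in terms of \emph{symmetric} dyadic Haar shifts, available for even kernels with smoothness $\delta > 1/2$ (in the spirit of Petermichl--Volberg's treatment of the Beurling--Ahlfors transform). Evenness of $\tilde{K}$ forces the shift coefficients $c^L_{I,J}$ to satisfy a symmetry relation $c^L_{I,J} = c^L_{J,I}$, so that after grouping the pairs $(I,J)$ and $(J,I)$, each slice can be linearized by scalar martingale transforms rather than matrix ones. The stronger smoothness condition $\delta > 1/2$ enters at this step to guarantee absolute convergence of the resulting Haar-shift series once the complexity factor $k$ is accounted for.

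Second, the key estimate \eqref{linearisation} is replaced by the scalar linearization
\begin{equation*}
\sum_{I \in \dd}\Big|\big\langle \langle f,h_I\rangle,\langle g,h_I\rangle\big\rangle_{\mathbb{C}^d}\Big| \;\leq\; 4\sup_{\sigma}\big|\langle \tilde{T}_\sigma f, g\rangle\big| \;\leq\; 4\,\tilde{N}([W]_{A_2^d})\,\|f\|_{\ltw}\|g\|_{\ltwi},
\end{equation*}
where the supremum runs over $\pm 1$-valued scalar sequences (controlling real and imaginary parts of the scalar pairings separately). This replaces the use of the orthogonal projections $P_I^i$ in \eqref{est:projections}.

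Third, the Bellman function is redefined by restricting $\sigma_I$ to scalars with $|\sigma_I|\leq 1$. Its range bound (property (ii)) becomes $\tilde{\fb}_X \leq 4\,\tilde{N}(X)\,\Fd^{1/2}\Gd^{1/2}$, and the concavity condition (iii) simplifies to
\begin{equation*}
\tilde{\fb}_X(A) \;\geq\; \tfrac{1}{2}\bigl(\tilde{\fb}_X(A_+)+\tilde{\fb}_X(A_-)\bigr) + \bigl|\langle \fd_+ - \fd_-,\,\gd_+ - \gd_-\rangle_{\mathbb{C}^d}\bigr|.
\end{equation*}
Lemma \ref{mainlemma} and its proof in Section 5 carry over: the array $\Lambda=(\langle (\fd_K-\fd_{I_0})/2^k,(\gd_L-\gd_{I_0})/2^k\rangle_{\mathbb{C}^d})$ is still rank one; decomposing along an orthonormal basis of $\mathbb{C}^d$ reproduces the factor $d$, after which Ball's multiple Hahn--Banach step supplies the zero-mean sequence $\{\alpha_I\}$ with $|\alpha_I|\leq 1/4$ as before. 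Lemma \ref{extdom} and the passage from $X$ to $X' = \tfrac{100}{9}X$ are unchanged.

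With these pieces in place, the argument of Section 6 yields $\|S\|_{\ltw\to\ltw}\leq c\cdot k\,d\,\tilde{N}([W]_{A_2^d})$ for each symmetric Haar shift of complexity $k$, and summing against $\tau(m,n)$ over the representation completes the proof, with the transference to the real line contributing the $2^{2(p-1)}$ factor and the $p$-dimensional grouping contributing the factor $p$. The main obstacle is the symmetric representation theorem itself: one needs a decomposition of $T$ into symmetric dyadic shifts whose coefficient decay absorbs the $k=\max\{m,n\}+1$ complexity factor of the slice-wise bound, which is precisely where the hypothesis $\delta > 1/2$ is consumed.
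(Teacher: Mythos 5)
Your outline matches the paper in its broad strokes (replace $N$ by $\tilde N$, use a Bellman function with the scalar concavity condition, slice the shifts, invoke the same domain and convexity results), but the linearization step at the heart of Section~5 does \emph{not} carry over as you claim, and this is exactly where your argument breaks.

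You write that the array $\Lambda=(\langle(\fd_K-\fd_{I_0})/2^k,(\gd_L-\gd_{I_0})/2^k\rangle_{\mathbb{C}^d})_{K,L}$ ``is still rank one.''  It is not: with vectors $u_K,\,v_L\in\mathbb{C}^d$, the matrix $(\langle u_K,v_L\rangle)_{K,L}=\sum_{i=1}^d(u_K^i\,\overline{v_L^i})_{K,L}$ has rank up to $d$.  That is precisely why the proof of Lemma~\ref{mainlemma} in the general case inserts the orthogonal projections $P_{I_0}^i$: each $\Lambda^i=(\langle P_{I_0}^i u_K,P_{I_0}^i v_L\rangle)_{K,L}$ \emph{is} rank one, and Ball's Hahn--Banach lemma can be applied factor by factor.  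But the scalar concavity inequality you propose, $\tilde{\fb}_X(A)\geq\frac12(\tilde{\fb}_X(A_+)+\tilde{\fb}_X(A_-))+|\langle\fd_+-\fd_-,\gd_+-\gd_-\rangle_{\mathbb{C}^d}|$, controls only bilinear forms built from scalar multiples $\alpha_K\alpha_L$, i.e.\ an expression of the form $|\sum_{K,L}\alpha_K\alpha_L\lambda_{KL}|$ with the full (rank-$\leq d$) matrix $\Lambda$; it does not provide the projected quantity $|\langle P^i(\fd_+-\fd_-),P^i(\gd_+-\gd_-)\rangle|$ that would be needed for the rank-one pieces.  To recover the projected form in the concavity inequality you would need a matrix multiplier in the martingale transform, which brings back $N$ rather than $\tilde N$ and defeats the purpose of working with even kernels.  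So ``decomposing along an orthonormal basis to reproduce the factor $d$'' is not available in the scalar framework.

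A symptom of the gap: under your proposed shift bound $\|S\|\lesssim k\,d\,\tilde N([W]_{A_2^d})$, the complexity dependence is only polynomial, so any $\delta>0$ would suffice for convergence of Hyt\"onen's series, and the hypothesis $\delta>1/2$ would play no role.  The paper's Theorem~\ref{mainshift_even} actually has $\|S\|\leq c\,k\,2^{k/2}\tilde N([W]_{A_2^d})$; the $2^{k/2}$ loss is inherent to the scalar linearization and is exactly what $\delta>1/2$ is there to absorb.  The correct argument (Lemma~\ref{mainlemma_even}) exploits the self-adjointness of the shifts coming from the even kernel to symmetrize $\lambda_{KL}$, then relates the diagonal bilinear norm to the off-diagonal one via the zero row/column sums, and invokes Grothendieck's theorem on Schur multipliers together with the bound $\|M\|_m\leq 2^{k/2}$ for $\pm1$-matrices $M\in\mathcal{M}_{2^k}(\mathbb{R})$; this is where the exponential complexity factor appears.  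In short: evenness of the kernel is used to make $\Lambda$ symmetric (not rank one), not to make it rank one, and the linearization tool changes from Ball's Hahn--Banach lemma to Grothendieck/Schur multipliers — a genuinely different mechanism that you would need in order to close this proof.
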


As before, the proof of this result follows from a corresponding inequality for self-adjoint Haar shift operators. More precisely, we will show the estimate 
\begin{equation}
    \|S^{mn}\|_{\ltw \to \ltw} \lesssim (\max\{m,n\}+1) 2^{\max\{m,n\}/2} \tilde{N} ([W]_{A_2})
\end{equation}
     for all dyadic Haar shifts \(S^{mn}\) of parameters \((m,n)\), which ensures the convergence of the series (since \(\delta > 1/2\)) in the representation theorem. This is the content of the following theorem.

\begin{theorem}\label{mainshift_even}
Let \(S\) be a self-adjoint dyadic Haar shift on \(\mathbb{R}\) of complexity \(k \geq 1\) and \(W\) be a matrix \(A_2^d\) weight. Then
\[\|S\|_{\ltw \to \ltw} \leq c \cdot k 2^{k/2} \tilde{N} ([W]_{A_2^d}),\]
where c is an absolute, positive constant.
\end{theorem}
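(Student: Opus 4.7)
The argument will parallel that of Theorem \ref{mainshift}, with adjustments tailored to the scalar $\pm 1$ martingale transform setting. Since $S$ is self-adjoint, its parameters satisfy $m = n = k-1$, and the coefficients $c^L_{I,J}$ are Hermitian symmetric. Decompose $S = \sum_{j=0}^{k-1} S_j$ into its slices as before, and for each slice rewrite $\langle S_j f, g\rangle$ in terms of differences $\Delta_P f := \langle f\rangle_P - \langle f\rangle_L$ for $P \in \mathcal{D}_k(L)$. Following the expansion in Section~3, the triangle inequality gives
\[
|\langle S_j f, g\rangle| \leq \sum_{L \in \mathcal{L}_j} |L| \sum_{P,Q \in \mathcal{D}_k(L)}\left|\left\langle \frac{\Delta_P f}{2^k}, \frac{\Delta_Q g}{2^k}\right\rangle_{\mathbb{C}^d}\right|.
\]
In contrast to Section~3, we will not introduce the eigenvector refinement $P_L^i$, since our linearization in the Bellman step uses scalar $\pm 1$ sequences rather than diagonal matrices in the eigenbasis.

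Define a modified Bellman function $\tilde\fb_X$ on the same domain $\mathfrak{D}_X$ as in Section~4, but with the supremum in (\ref{bell}) now taken only over scalar sequences $\sigma$ with $\sigma_I \in \{\pm 1\}$. The range property becomes $\tilde\fb_X \leq 4\tilde{N}(X)\Fd^{1/2}\Gd^{1/2}$; this is obtained from the scalar analog of (\ref{linearisation}),
\[
\sd \big|\big\langle\langle f, h_I\rangle, \langle g, h_I\rangle\big\rangle_{\mathbb{C}^d}\big| \leq \tilde{N}(X)\|f\|_{\ltw}\|g\|_{\ltwi},
\]
found by choosing $\sigma_I \in \{\pm 1\}$ to match the sign of each diagonal inner product --- a step that no longer loses the factor $d$. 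The concavity condition also simplifies: the supremum over matrices $\tau$ in property~(iii) disappears and is replaced by $|\langle \fd_+ - \fd_-, \gd_+ - \gd_-\rangle_{\mathbb{C}^d}|$. The verification of these properties proceeds verbatim as in Section~4.

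The analog of Lemma~\ref{mainlemma} is where the new difficulty lies. The matrix $\Lambda_{P,Q} := \langle \Delta_P f, \Delta_Q g\rangle_{\mathbb{C}^d}$ has rank up to $d$ (as opposed to the rank-one matrices $\Lambda^i$ used in the general case), so Ball's multiple Hahn--Banach argument does not directly produce a single scalar sign sequence of the required size. To handle this, one bounds $\sum_{P,Q}|\Lambda_{P,Q}|$ by $\big(\sum_P\|\Delta_P f\|\big)\big(\sum_Q\|\Delta_Q g\|\big)$ via Cauchy--Schwarz in $\mathbb{C}^d$, and then relates the $\ell^1$ norms to $\ell^2$ norms through $\sum_P\|\Delta_P f\| \leq 2^{k/2}\big(\sum_P\|\Delta_P f\|^2\big)^{1/2}$, losing a factor $2^{k/2}=\sqrt{|\mathcal{D}_k(L)|}$. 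Using the probabilistic identity $\mathbb{E}_{\alpha \sim \mathrm{Unif}\{\pm1\}^{2^k}}\|\sum_P\alpha_P \Delta_P f\|^2 = \sum_P\|\Delta_P f\|^2$ (and its analog for $\beta$), together with the balancing constraint $\sum_P \Delta_P f = 0$, we select real $\pm 1$ sign sequences $(\alpha_P),(\beta_Q)$ with zero sum such that the bilinear form $|\langle \sum_P \alpha_P \Delta_P f, \sum_Q \beta_Q \Delta_Q g\rangle_{\mathbb{C}^d}|$ captures a positive fraction of the corresponding $\ell^2$ product. Combined with the concavity of $\tilde\fb_{X'}$ (with $X' = \tfrac{100}{9}X$ from Lemma~\ref{extdom}), this yields the telescoping estimate with a factor $c\cdot 2^{k/2}$ in place of $c\cdot d$.

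The remaining steps follow Section~6 verbatim: iterate over dyadic generations, cover $\mathbb{R}$ by intervals of increasing length $2^M$, and sum over the $k$ slices to obtain $\|S\|_{\ltw\to\ltw} \leq c\cdot k\cdot 2^{k/2}\tilde{N}([W]_{A_2^d})$. The main obstacle is the vector-valued version of the Hahn--Banach step: whereas in the general case Ball's theorem handles each rank-one piece of $\Lambda^i$ optimally, at a total cost proportional to $d$, in the scalar-sign regime one can only align with the $\ell^2$ norm of the $2^k$ vector-valued differences $\Delta_P f \in \mathbb{C}^d$, losing a factor $\sqrt{2^k}$ relative to the $\ell^1$ norm. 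This intrinsic combinatorial loss is the origin of the $2^{k/2}$ factor in the bound, and explains the kernel-smoothness requirement $\delta > 1/2$ in Theorem~\ref{mainthm_even} needed for the convergence of the Hyt\"onen representation series.
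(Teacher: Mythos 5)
Your reduction to the slices $S_j$ and the modifications of the Bellman function (supremum over $\sigma_I \in \{\pm 1\}$, no factor $d$ in the range bound, no $\tau$-supremum in (iii)) are in line with the paper. But the heart of the proof --- the analogue of Lemma~\ref{mainlemma} --- is wrong in two places, and you also drop a crucial structural step that the paper needs.

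First, you never use self-adjointness in the right way. The paper does not merely apply the triangle inequality to $\langle S_j f,g\rangle$; it sums $\langle S_j f, g\rangle + \langle f, S_j g\rangle$ so that the resulting matrix
\[
\lambda_{KL} = \Big \langle  \tfrac{\fd_K - \fd_{I_0}}{2^k}, \tfrac{\gd_L - \gd_{I_0}}{2^k} \Big \rangle_{\mathbb{C}^d}  + \Big \langle  \tfrac{\fd_L - \fd_{I_0}}{2^k}, \tfrac{\gd_K - \gd_{I_0}}{2^k} \Big \rangle_{\mathbb{C}^d}
\]
is symmetric. Without this symmetry the whole argument collapses: the Bellman function inequality produces, from the single displacement sequence $\alpha$, a \emph{quadratic form} term $\big|\sum_{K,L}\alpha_K\alpha_L\lambda_{KL}\big|$, and for an asymmetric $\Lambda$ this quantity can be dominated by the vanishing diagonal and does not control $\sum|\lambda_{KL}|$.

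Second, and more seriously, your key step employs two different sign sequences $(\alpha_P)$ and $(\beta_Q)$. The Bellman dynamics does not permit this: from $A_{I_0}^{\pm}= A_{I_0} \pm 2^{-k}\sum_I \alpha_I A_I$, one has $\fd^{\pm}-\fd_{I_0} = \pm 2^{-k}\sum_I \alpha_I \fd_I$ and $\gd^{\pm}-\gd_{I_0} = \pm 2^{-k}\sum_I \alpha_I \gd_I$ with the \emph{same} $\alpha$, because each tuple $A_I$ contains both the $\fd$- and $\gd$-coordinates. So what must be bounded below is $\big|\langle\sum_P\alpha_P\Delta_P f, \sum_Q\alpha_Q\Delta_Q g\rangle\big|$, not the mixed bilinear form you write. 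The paper resolves this by first proving that for the symmetric zero-row/column-sum matrix $\Lambda$ the single-sequence supremum $\|\Lambda\|_1$ is equivalent to the two-sequence supremum $\|\Lambda\|_2$ (up to factors $64$ and $192$), and only then comparing $\|\Lambda\|_2$ with $\sum|\lambda_{KL}|$.

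Third, even granting two sequences, your chain Cauchy--Schwarz $\Rightarrow$ $\ell^1 \le 2^{k/2}\ell^2$ $\Rightarrow$ Khintchine-type selection cannot close: you would need
\[
\Big(\sum_P\|\Delta_P f\|^2\Big)^{1/2}\Big(\sum_Q\|\Delta_Q g\|^2\Big)^{1/2} \lesssim \Big|\Big\langle \sum_P\alpha_P\Delta_P f,\ \sum_Q\beta_Q\Delta_Q g\Big\rangle_{\mathbb{C}^d}\Big|,
\]
but this is Cauchy--Schwarz in the \emph{wrong} direction. It is false whenever the two resulting vectors are nearly orthogonal; for instance with $d=2$, $k=1$, $\Delta_{I_0^\pm}f = \pm e_1$, $\Delta_{I_0^\pm}g = \pm e_2$, the left side is positive yet every mixed inner product vanishes. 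The mechanism that actually produces the sharp factor $2^{k/2}$ in the paper is Grothendieck's inequality on Schur multipliers combined with the Davidson--Donsig bound $\|M\|_m \le 2^{k/2}$ for $2^k\times 2^k$ matrices with $\pm 1$ entries, which controls $\sum|\lambda_{KL}|$ by $2^{k/2}\|\Lambda\|_1$. A naive random-sign argument controls an $\ell^2$ quantity $\big(\sum_{K,L}|\lambda_{KL}|^2\big)^{1/2}$, which is smaller than $\sum|\lambda_{KL}|$ only by a factor $2^k$, not $2^{k/2}$, and would consequently demand $\delta > 1$ rather than $\delta > 1/2$ for convergence in Hyt\"onen's representation.

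Finally, a minor point: self-adjointness of a Haar shift does not in general force $m = n = k-1$; the paper works with a self-adjoint shift of complexity $k$ without that assumption.
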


The reduction of the proof follows almost like in Section 3, except that the orthogonal projection operators \(P_I^i\) don't appear. Since the dyadic Haar shift \(S\) is self-adjoint, we obtain the following estimate:
\begin{align*}
& 2 \Big | \left \langle S_j f,g \right \rangle_{\ltw,\ltwi}  \Big| = \Big | \left \langle S_j f,g \right \rangle_{\ltw,\ltwi}  + \left \langle f,S_j g \right \rangle_{\ltw,\ltwi} \Big |  \\
& \leq \sum_{L \in  \mathcal{L}_j} |L|
\sum_{P,Q \in \mathcal{D}_k(L)} \bigg | \bigg \langle  \frac{\langle f \rangle _P -  \langle f \rangle _L}{2^k}, \frac{\langle g \rangle _Q -  \langle g \rangle _L}{2^k}    \bigg \rangle_{\mathbb{C}^d}    +  \bigg \langle  \frac{\langle f \rangle _Q -  \langle f \rangle _L}{2^k}, \frac{\langle g \rangle _P -  \langle g \rangle _L}{2^k}    \bigg \rangle_{\mathbb{C}^d}   \bigg|.
\end{align*}

With the same notations as in Section 4, the Bellman function \(\fb_X\) is defined by 
\[\fb_X(\fd, \Fd, \ud, \gd, \Gd, \vd):= |I_0|^{-1} \sup \sum_{I \subseteq I_0} \left | \big \langle \langle f \rangle _{I^+} - \langle f \rangle_{I^-}, \langle g \rangle _{I^+} - \langle g\rangle_{I^-}  \big \rangle_{\mathbb{C}^d} \right | \cdot |I|.\]
The only differences between the properties of this function and those of the old Bellman function (\ref{bell}) are the replacement of \(N(X)\) by \(\tilde{N}(X)\) in (ii) and the absence of the operators \(P_I^i\) in (iii).

The proof of Theorem \ref{mainshift_even} is based on the following result, which is a similar version of Lemma \ref{mainshift}.
\begin{lemma}\label{mainlemma_even}
Let \(X>1\) and \(\fb_X\) be a function satisfying properties (i)-(iii) from Section 6. Fix \(k \geq 1\) and a dyadic interval \(I_0\). For all \(I \in \mathcal{D}_n(I_0),\ 0 \leq n \leq k,\) let the points \(A_I= (\fd_I, \Fd_I, \ud_I, \gd_I, \Gd_I, \vd_I) \in \mathfrak{D}_X=\mathrm{Dom}\, \fb_X\) be given. Assume that the points \(A_I\) satisfy the dyadic martingale dynamics, i.e. \(A=(A_{I^+}+A_{I^-})/2,\) where \(I^+\) and \(I^-\) are the children of \(I\). For \(K,L \in \mathcal{D}_k(I_0)\), we define the coefficients \(\lambda_{KL}\) by
\[\lambda_{KL}:= \bigg \langle  \frac{\fd_K - \fd_{I_0}}{2^k}, \frac{\gd_L - \gd_{I_0}}{2^k} \bigg \rangle_{\mathbb{C}^d}  + \bigg \langle  \frac{\fd_L - \fd_{I_0}}{2^k}, \frac{\gd_K - \gd_{I_0}}{2^k} \bigg \rangle_{\mathbb{C}^d}.\]

Then
\[\sum_{K,L \in \mathcal{D}_k({I_0})} |\lambda_{KL}| \leq c \cdot 2^{k/2} \bigg( \fb_{X'}(A_{I_0}) - 2^{-k} \sum_{I \in \mathcal{D}_k(I_0)} \fb_{X'}(A_I) \bigg),\]
where \(c\) is a positive absolute constant and \(X'=\frac{100}{9}X\).
\end{lemma}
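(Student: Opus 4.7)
The plan is to mirror the proof of Lemma \ref{mainlemma}. First I would reduce the statement to the following technical claim \((\star)\): there exists a real sequence \(\{\alpha_I\}_{I \in \mathcal{D}_k(I_0)}\) with \(|\alpha_I| \le 1/4\) and \(\sum_I \alpha_I = 0\) such that
\[
\bigg| \sum_{K, L \in \mathcal{D}_k(I_0)} \alpha_K \alpha_L \lambda_{KL} \bigg| \ge c \cdot 2^{-k/2} \sum_{K, L \in \mathcal{D}_k(I_0)} |\lambda_{KL}|.
\]
Granted \((\star)\), one sets \(A_{I_0}^\pm := A_{I_0} \pm 2^{-k} \sum_I \alpha_I A_I\) and \(a_J^\pm := 1 \pm \alpha_J\), and defines the auxiliary points \(A_I^\pm\) for intermediate \(I \in \mathcal{D}_n(I_0)\), \(1 \le n \le k\), exactly as in the proof of Lemma \ref{mainlemma}. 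Since the estimate \(a_J^\pm \in [3/4, 5/4]\) depends only on these bounds, the argument of Lemma \ref{lemm:convex} yields \(A_I^\pm \in \mathfrak{D}_{25/9\,X}\), and Lemma \ref{extdom} places every connecting line segment inside \(\mathfrak{D}_{X'}\) with \(X' := 100/9\,X\); iterating the midpoint concavity of \(\fb_{X'}\) along the dyadic tree \(k\) times and using \(a_I^+ + a_I^- = 2\) for \(I \in \mathcal{D}_k(I_0)\) gives
\[
\tfrac{1}{2} \bigl( \fb_{X'}(A_{I_0}^+) + \fb_{X'}(A_{I_0}^-) \bigr) \ge 2^{-k} \sum_{I \in \mathcal{D}_k(I_0)} \fb_{X'}(A_I).
\]

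Next, setting \(F_I := (\fd_I - \fd_{I_0})/2^k\) and \(G_I := (\gd_I - \gd_{I_0})/2^k\), the constraint \(\sum_I \alpha_I = 0\) gives \(\fd^\pm - \fd_{I_0} = \pm \sum_I \alpha_I F_I\) and symmetrically for \(\gd\). The symmetry \(\lambda_{KL} = \lambda_{LK}\) together with the real-valuedness of \(\alpha\) then delivers
\[
\sum_{K, L} \alpha_K \alpha_L \lambda_{KL} = 2 \bigg\langle \sum_K \alpha_K F_K, \sum_L \alpha_L G_L \bigg\rangle_{\mathbb{C}^d} = \tfrac{1}{2} \langle \fd^+ - \fd^-, \gd^+ - \gd^- \rangle_{\mathbb{C}^d}.
\]
The even-kernel version of condition (iii), in which the supremum over matrices \(\tau\) has collapsed to the scalar case (because the admissible \(\sigma_I\) in the definition of \(\tilde{N}(X)\) are only the values \(\pm 1\)), now yields
\[
\bigg| \sum_{K, L} \alpha_K \alpha_L \lambda_{KL} \bigg| \le \tfrac{1}{2} \Bigl( \fb_{X'}(A_{I_0}) - \tfrac{1}{2} \bigl( \fb_{X'}(A_{I_0}^+) + \fb_{X'}(A_{I_0}^-) \bigr) \Bigr).
\]
Combining this with the iterated midpoint estimate and \((\star)\) completes the proof, the universal constant being determined by that in \((\star)\).

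The hard part is \((\star)\). In Lemma \ref{mainlemma} each matrix \(\Lambda^i\) had complex rank one, thanks to the one-dimensional projection \(P_{I_0}^i\); Ball's multiple Hahn-Banach theorem applied to the two real factors then produced the corresponding bound with a universal constant loss. Here \(\Lambda = (\lambda_{KL})\) is complex symmetric, of rank possibly as large as \(N := 2^k\), and no such rank-one factorisation is available. To establish \((\star)\) I would first dominate \(\sum_{K, L} |\lambda_{KL}| \le 2 \bigl( \sum_K \|F_K\|_{\mathbb{C}^d} \bigr) \bigl( \sum_L \|G_L\|_{\mathbb{C}^d} \bigr)\) by Cauchy-Schwarz, and then combine a Khintchine-type random-sign inequality---which for i.i.d.\ signs \(\epsilon_K = \pm 1\) conditioned on \(\sum \epsilon_K = 0\) gives \(\mathbb{E} \|\sum_K \epsilon_K F_K\|^2_{\mathbb{C}^d} \ge c \sum_K \|F_K\|^2 \ge c N^{-1} \bigl( \sum_K \|F_K\| \bigr)^2\), and similarly for \(G\)---with a two-sequence application of Ball's theorem to the scalar projections of \(F\) and \(G\) onto a common maximising direction in \(\mathbb{C}^d\). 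The factor \(2^{-k/2}\) is essentially sharp, reflecting the worst case in which the vectors \((F_K)\) and \((G_K)\) are nearly orthonormal in \(\mathbb{C}^d\) so that the \(\ell^1\)-norm of \(\Lambda\) exceeds its Frobenius norm by a factor of \(N\).
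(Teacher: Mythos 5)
Your reduction of the lemma to the claim labelled \((\star)\) is correct and matches the skeleton in the paper: define \(A_{I_0}^{\pm}\) and the auxiliary points as in Lemma~\ref{mainlemma}, invoke the convexity lemmas \ref{lemm:convex} and \ref{extdom}, and iterate concavity of \(\fb_{X'}\) down the tree. The identity \(\sum_{K,L} \alpha_K \alpha_L \lambda_{KL} = 2\langle \sum_K \alpha_K F_K,\sum_L \alpha_L G_L\rangle_{\mathbb{C}^d}\) for real \(\alpha\) is also right, using the symmetry of \(\Lambda\). All of this is the same as the paper.

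The gap is in your proof of \((\star)\), and it is genuine. Your Khintchine step produces (at best) a random sign vector \(\epsilon\) making \(\bigl\|\sum_K \epsilon_K F_K\bigr\|_{\mathbb{C}^d}\) comparable to \(N^{-1/2}\sum_K\|F_K\|\) and, separately, a sign vector making \(\bigl\|\sum_L\epsilon_L G_L\bigr\|\) comparable to \(N^{-1/2}\sum_L\|G_L\|\). But you need a \emph{single} admissible \(\alpha\) for which the bilinear quantity \(\bigl|\langle\sum_K\alpha_K F_K,\sum_L\alpha_L G_L\rangle_{\mathbb{C}^d}\bigr|\) is large, and controlling the two norms separately says nothing about the inner product: the two aggregate vectors can easily be orthogonal in \(\mathbb{C}^d\) even when both have large norm, so the lower bound would be \(0\). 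The phrase ``a two-sequence application of Ball's theorem to the scalar projections onto a common maximising direction'' is precisely where the argument would have to carry real content, and as stated it does not: there need not exist any fixed direction \(e\in\mathbb{C}^d\) onto which a significant fraction of the \(\ell^1\)-mass of \emph{both} families \((\langle F_K,e\rangle)_K\) and \((\langle G_L,e\rangle)_L\) concentrates, because the matrix \(\Lambda\), though symmetric, can have rank up to \(2^k\) and its mass can be spread over many directions. (In Lemma~\ref{mainlemma} this obstruction is absent precisely because the one-dimensional projections \(P_{I_0}^i\) force each \(\Lambda^i\) to have complex rank \(1\).)

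The paper takes a different route for \((\star)\): it defines \(\|\Lambda\|_1\) as the supremum of \(\bigl|\sum\alpha_K\alpha_L\lambda_{KL}\bigr|\) over admissible real \(\alpha\), shows using the symmetry and zero row/column sums of \(\Lambda\) that this is comparable (up to absolute constants) to the unconstrained bilinear norm \(\|\Lambda\|_2 = \sup_{\|\alpha\|_\infty,\|\beta\|_\infty\le 1}\bigl|\sum\alpha_K\beta_L\lambda_{KL}\bigr|\), and then invokes Grothendieck's theorem in the form of Theorem~\ref{grot}: the convex hull of rank-one Schur multipliers of norm one contains the ball of Schur multipliers of norm \(K_G^{-1}\). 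Testing against the sign matrix of (the real or imaginary part of) \(\Lambda\), which has Schur multiplier norm at most \(2^{k/2}\), yields \(\sum_{K,L}|\lambda_{KL}|\le 384\,K_G\,2^{k/2}\|\Lambda\|_1\). That structural use of Grothendieck's inequality and the Schur multiplier norm bound for \(\pm1\) matrices is the actual source of the factor \(2^{k/2}\), and it is not something your Khintchine-and-projection sketch recovers.
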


The only difference between the proof of this result and that of Lemma \ref{mainshift} is the way to obtain the existence of the real sequence  
\( \{\alpha_I\}_{I \in \mathcal{D}_k(I_0)}\) such that  \(|\alpha_I| \leq 1/4\) for all  \(I \in \mathcal{D}_k(I_0),  \\ \sum_{I \in \mathcal{D}_k(I_0)} \alpha_I =0,\) and
\begin{equation}\label{sequence_even}
\bigg | \sum_{K,L \in \mathcal{D}_k({I_0})} \alpha_K \alpha_L \lambda_{KL} \bigg | \geq c \cdot 2^{-k/2} \sum_{K,L \in \mathcal{D}_k({I_0})} |\lambda_{KL}|.
\end{equation}
We will use again the notation \(\Lambda := \left (\lambda_{KL} \right )_{K,L \in \mathcal{D}_k(I_0)}.\) Let us now define
\[\|\Lambda\|_1 := \sup_{\alpha} \bigg | \sum_{K,L \in \mathcal{D}_k({I_0})} \alpha_K \alpha_L \lambda_{KL} \bigg|,\]
where the supremum is taken over all real sequences \(\alpha=\{\alpha_I\}_{I \in \mathcal{D}_k(I_0)}\) with \(\|\alpha\|_{\infty} \leq 1/4\) and \(\sum_{I \in \mathcal{D}_k(I_0)} \alpha_I =0.\) Since we are in a finite-dimensional space, we can find a sequence \(\alpha\) with \(|\alpha_I| \leq 1/4,\ I \in \mathcal{D}_k(I_0),\) and  \(\sum_{I \in \mathcal{D}_k(I_0)} \alpha_I =0,\) such that
\begin{equation}\label{norm1}
\bigg | \sum_{K,L \in \mathcal{D}_k({I_0})} \alpha_K \alpha_L \lambda_{KL} \bigg| = \|\Lambda\|_1.
\end{equation}

Using the symmetry of \(\Lambda\) and the fact that its row and column sums are all zero, it is easy to see that \(\|\Lambda\|_1\) is equivalent to
\[\|\Lambda\|_2 := \sup_{\substack{
                                   \|\alpha\|_{\infty} \leq 1 \\
                                   \|\beta\|_{\infty} \leq 1}}
\bigg| \sum_{K,L \in \mathcal{D}_k({I_0})} \alpha_K \beta_L \lambda_{KL} \bigg|,\]
where we take the supremum over all real sequences \( \alpha=\{\alpha_I\}_{I \in \mathcal{D}_k(I_0)}\) and \(\beta=\{\beta_I\}_{I \in \mathcal{D}_k(I_0)}.\) More precisely, we have \(64 \|\Lambda\|_1 \leq \|\Lambda\|_2 \leq 192 \|\Lambda\|_1.\) Since we may assume that \(\Lambda\) is not the zero matrix (otherwise the lemma becomes trivially true), \(\|\Lambda\|_1,\) and hence \(\|\Lambda\|_2\), are not \(0.\)

We also need the notion of Schur multiplier. If \(A=(a_{ij}) \in \mathcal{M}_n(\mathbb{R})\), the Schur multiplier is the bounded operator \(S_A:\mathcal{M}_n(\mathbb{R}) \to \mathcal{M}_n(\mathbb{R})\) that acts on a matrix \(M=(m_{ij})\) by Schur multiplication: \(S_A(M)=(a_{ij}m_{ij})\). The Schur multiplier norm is
\[\|A\|_m:=\sup_{M \in \mathcal{M}_n(\mathbb{C})} \frac{\|S_A(M)\|_{op}}{\|M\|_{op}},\]
where \(\|M\|_{op}\) is the operator norm of the matrix \(M\) on \(\ell^2(\{1,2,\ldots,n\})\). If \(A=(a_{ij}) \in \mathcal{M}_n(\mathbb{R})\) is of the form \(a_{ij}=s_i t_j\), then \(A\) is called a rank one Schur multiplier. It is easy to see that if \(A\) is a rank one Schur multiplier, then \(\|A\|_m \leq \|(s_i)_{i=1}^n\|_{\infty} \|(t_j)_{j=1}^n\|_{\infty} \). A classical result due to A. Grothendieck says that the converse is essentially true (up to a constant called Grothendieck constant).

\begin{theorem} [\protect{\cite[Theorem 1.2]{DaDo07}, \cite[Theorem 3.2]{Pi12}}] \label{grot}
The closure of the convex hull of the rank one Schur multipliers of norm one in the topology of pointwise convergence contains the ball of all Schur multipliers of norm at most \(K_G^{-1}\), where \(K_G\) is a universal constant.
\end{theorem}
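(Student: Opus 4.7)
The plan is to deduce this statement from Grothendieck's inequality via Hahn--Banach duality. First, I would pass to the polar formulation: denote by $C$ the closed convex hull (in pointwise convergence) of the rank-one Schur multipliers of norm $\leq 1$, and set $K = \{A : \|A\|_m \leq K_G^{-1}\}$. By the Hahn--Banach separation theorem, the inclusion $K \subset C$ is equivalent to the statement that every linear functional $\phi$ on the space of finitely supported matrices that is bounded by $1$ on all rank-one Schur multipliers of norm $\leq 1$ is bounded by $K_G$ on the unit ball of the Schur multiplier norm. Identifying $\phi$ with a matrix $B = (b_{ij})$ through $\phi(A) = \langle B, A \rangle = \sum b_{ij} a_{ij}$, and using that rank-one Schur multipliers of norm $\leq 1$ are exactly matrices $(s_i t_j)$ with $\|s\|_\infty, \|t\|_\infty \leq 1$, the hypothesis on $\phi$ becomes the scalar bilinear bound $\sup_{|s_i|,|t_j|\leq 1}|\sum_{i,j} b_{ij} s_i t_j| \leq 1$.

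Second, I would translate the conclusion using the Grothendieck--Haagerup characterization of the Schur multiplier norm: $\|A\|_m \leq 1$ if and only if there exist vectors $\{x_i\}, \{y_j\}$ in some Hilbert space $H$ with $\|x_i\|, \|y_j\| \leq 1$ and $a_{ij} = \langle x_i, y_j\rangle_H$. With this representation, the conclusion $\sup\{|\langle B, A\rangle| : \|A\|_m \leq 1\} \leq K_G$ becomes
\[
\sup_{\|x_i\|, \|y_j\| \leq 1 \text{ in } H}\ \Bigl|\sum_{i,j} b_{ij}\langle x_i, y_j\rangle\Bigr|\ \leq\ K_G \sup_{|s_i|, |t_j|\leq 1}\ \Bigl|\sum_{i,j} b_{ij} s_i t_j\Bigr|.
\]
This is precisely Grothendieck's inequality, with $K_G$ Grothendieck's constant, so the duality translation completely reduces the theorem to that classical statement.

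The third step is to establish Grothendieck's inequality itself, for which I would use Krivine's Gaussian rounding argument: the identity $\mathbb{E}[\mathrm{sign}\langle x, g\rangle\,\mathrm{sign}\langle y, g\rangle] = \tfrac{2}{\pi}\arcsin\langle x, y\rangle$ for a standard Gaussian $g$ allows one to replace Hilbert-space data by $\pm 1$-valued scalar data, provided the vectors $x_i, y_j$ are first renormalized through an appropriate $\sinh$/$\arcsin$ rescaling so that the post-rounding inner products recover the original inner products up to a controlled constant; this produces the bound $K_G \leq \pi/(2\log(1+\sqrt{2}))$.

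The main obstacle is not the duality step but the Grothendieck--Haagerup factorization characterization of $\|A\|_m$, which is itself a nontrivial theorem, typically proved through a Hahn--Banach separation argument for positive linear functionals on the operator system spanned by the matrix units, or equivalently via a Pietsch-style factorization through $\ell_2$. A secondary subtlety is matching topologies in the duality step: since the closed convex hull is taken pointwise, one must separate using functionals supported on finitely many entries, but this causes no real difficulty because both the hypothesis and the conclusion in the polar formulation involve only finitely many $b_{ij}$. Once these two ingredients — the Haagerup characterization and the scalar Grothendieck inequality — are in hand, the theorem follows by the dual reformulation above.
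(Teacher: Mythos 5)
Your argument is correct, but note that the paper does not prove this statement at all: Theorem \ref{grot} is imported by citation (Davidson--Donsig, Pisier), so there is no internal proof to compare against. What you have written is a faithful reconstruction of the standard argument in those references. The bipolar reduction is right: with $C$ the pointwise-closed convex hull of the rank-one multipliers and $K$ the $K_G^{-1}$-ball, the inclusion $K\subset C$ is equivalent to $C^{\circ}\subset K^{\circ}$, the relevant functionals are exactly the finitely supported matrices $B$ (the dual of a product of copies of $\mathbb{C}$), and both sets are balanced, so one may pass from real-part separation to absolute values; the distinction between rank-one multipliers of norm exactly one and norm at most one is also immaterial, since the latter lie in the closed convex hull of the former together with $0$. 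The polar inequality then becomes precisely Grothendieck's inequality once one invokes the Haagerup--Paulsen--Power--Smith characterization $\|A\|_m\le 1 \iff a_{ij}=\langle x_i,y_j\rangle$ with $\|x_i\|,\|y_j\|\le 1$, and Krivine's rounding gives the constant. The only caveat is that your proof rests on two substantial black boxes --- the factorization characterization of the Schur multiplier norm and Grothendieck's inequality itself --- which you correctly identify and sketch; for a result the authors themselves treat as classical input, that is entirely appropriate.
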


If \(\alpha\) and \(\beta\) are two real sequences as above, the matrix \(\Phi=(\Phi_{KL})=(\alpha_K \beta_L)\) is a rank one Schur multiplier of norm \(\|\Phi\|_m\) at most \(1\). The inequality \(\|\Lambda\|_2 \leq 192 \|\Lambda\|_1\) can thus be rewritten as
\[\sup \big |\langle \Phi, \Lambda \rangle_{HS} \big | \leq 192 \|\Lambda\|_1,\]
where $\langle \cdot, \cdot \rangle_{HS}$ denotes inner product on the Hilbert-Schmidt class and the supremum is taken over all rank one Schur multipliers \(\Phi\) of norm at most \(1\).

Using Theorem \ref{grot}, we obtain that
\[\sup \big |\langle M, \Lambda \rangle_{HS} \big | \leq 192 K_G \|\Lambda\|_1,\]
where the supremum is now taken over all Schur multipliers \(M\) of norm at most \(1\), and \(K_G\) is the (real) Grothendieck constant.

By choosing either the real or the imaginary part of the matrix \(\Lambda\) (the one with greater \(\ell^1\)-norm), we have \( \sum_{K,L \in \mathcal{D}_k({I_0})} |\lambda_{KL}| \leq 2 \sup \big | \langle M, \Lambda \rangle_{HS} \big |,\) where the supremum is taken over all matrices \(M \in \mathcal{M}_{2^k}(\mathbb{R})\) with entries \(\pm 1\). For such a matrix \(M\) we have \(\|M\|_m \leq 2^{k/2}\), see \cite{DaDo07}, Lemma 2.5. Putting everything together, we get the inequality
\begin{equation}\label{sumbdd}
\sum_{K,L \in \mathcal{D}_k({I_0})} |\lambda_{KL}| \leq 384 K_G 2^{k/2} \|\Lambda\|_1.
\end{equation}
Using \eqref{norm1} and \eqref{sumbdd}, it follows that there exists a real sequence \(\alpha\) with \(|\alpha_I| \leq 1/4\) for all \(I \in \mathcal{D}_k(I_0),\) and  \(\sum_{I \in \mathcal{D}_k(I_0)} \alpha_I =0,\) such that
\[\sum_{K,L \in \mathcal{D}_k(I_0)} |\lambda_{KL}| \leq 384 K_G 2^{k/2} \|\Lambda\|_1 = 384 K_G 2^{k/2} \bigg | \sum_{K,L \in \mathcal{D}_k({I_0})} \alpha_K \alpha_L \lambda_{KL} \bigg|,\]
which is what we wanted to show. Since the other arguments are the same as in Lemma \ref{mainshift}, this completes the proof of Lemma \ref{mainlemma_even}.

The inequality in Theorem \ref{mainshift_even} is now obtained as in Section 6.

\section{A matrix version of the weighted Carleson Embedding Theorem}

In this section we will prove a version of the matrix-weighted Carleson Embedding Theorem. In the weighted setting, contrary to the unweighted case, the scalar-valued Carleson Embedding Theorem cannot be used to obtain the matrix version of the theorem. Here is the main result of this section.

\begin{theorem}[Matrix Carleson Embedding Theorem]\label{matrix_CET}
Let \(W\) be a \(d \times d\) matrix weight, and \(\{A_I\}_{I \in \dd}\) be a sequence of \( d \times d\) positive definite matrices. Then for $0 < t\le 1$,
\[
t\sd \big \langle (I_d +  t \langle W \rangle_I ^{-1} \widetilde{\md}_I)^{-1} A_I  (I_d +  t \widetilde{\md}_I  \langle W \rangle_I^{-1})^{-1} \langle W^{1/2} f \rangle_I, \langle W^{1/2} f \rangle_I \big \rangle_{\mathbb{C}^d} \leq 8 \|f\|^2_{\ltr}\]
if 
\[\frac{1}{|I|} \sum_{J \subseteq I} \langle W \rangle_J  A_J \langle W \rangle_J  \leq \langle W \rangle_I, \quad \text{for all }  I \in \dd,\]
where \(\widetilde{\md}_J = \frac{1}{|J|} \sum_{K \subsetneq J} \langle W \rangle_K A_K \langle W \rangle_K\) and \(I_d\) is the \(d \times d\) identity matrix.
\end{theorem}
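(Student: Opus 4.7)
My plan is to prove this matrix Carleson Embedding Theorem by a Bellman function argument, in the spirit of Lemma~\ref{mainlemma} but with a much simpler, scalar-valued Bellman function adapted to the structure of the left-hand side. First I would restrict the sum over \(\dd\) to intervals \(I\subseteq I_0\) for a large dyadic \(I_0\) containing the support of \(f\) (we may assume \(f\) compactly supported by density), prove the truncated estimate, and then let \(I_0\) exhaust \(\mathbb{R}\) by monotone convergence on the non-negative summands. For each such \(I\) set
\[\fd_I := \langle W^{1/2} f\rangle_I, \quad \Fd_I := \langle \|f\|^2_{\mathbb{C}^d}\rangle_I, \quad \md_I := \langle W\rangle_I, \quad \Xi_I := \frac{1}{|I|}\sum_{J\subseteq I}\md_J A_J \md_J,\]
and write \(N_I := \md_I + t\Xi_I\), \(\bar N_I := \md_I + t\widetilde{\md}_I\). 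An elementary computation shows that \(\fd_I,\Fd_I,\md_I\) are exact dyadic martingales, that \(\Xi_I\) is a super-martingale with jump \(\Xi_I - (\Xi_{I^+}+\Xi_{I^-})/2 = \md_I A_I \md_I/|I|\), and that \(\widetilde{\md}_I = (\Xi_{I^+}+\Xi_{I^-})/2\); in particular \(N_I-\bar N_I = t\md_I A_I\md_I/|I|\), and the Carleson hypothesis becomes exactly \(0\le \Xi_I\le \md_I\). Denote the \(I\)-th summand of the left-hand side by \(\Phi_I\); using the identity \((I_d + t\md^{-1}\widetilde{\md})^{-1}A(I_d + t\widetilde{\md}\md^{-1})^{-1} = \bar N^{-1}\md A\md \bar N^{-1}\), we have \(\Phi_I = t\langle \bar N_I^{-1}\md_I A_I\md_I \bar N_I^{-1}\fd_I,\fd_I\rangle\).

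The Bellman function I would try is
\[B(\fd,\Fd,\md,\Xi) := \Fd - \langle (\md + t\Xi)^{-1}\fd,\fd\rangle_{\mathbb{C}^d},\]
on the convex domain \(\{\md>0,\ 0\le \Xi\le \md,\ \langle \md^{-1}\fd,\fd\rangle \le \Fd\}\). Its range satisfies \(0\le B\le \Fd\): the upper bound is immediate and the lower bound reduces, via \((\md+t\Xi)^{-1}\le \md^{-1}\), to the matrix Cauchy--Schwarz inequality \(\langle \md_I^{-1}\fd_I,\fd_I\rangle\le \Fd_I\) for the averages (obtained from the variational identity \(\langle\md^{-1}\fd,\fd\rangle = \sup_{h\in\mathbb{C}^d}\{2\Re\langle h,\fd\rangle - \langle \md h,h\rangle\}\) by applying \(2ab\le a^2+b^2\) pointwise inside \(\int_I\langle W^{1/2}h,f\rangle\)). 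Once the key concavity inequality \(B_I - \tfrac12(B_{I^+}+B_{I^-})\ge \Phi_I/(2|I|)\) is in hand, multiplying by \(|I|\) and telescoping over \(I\subseteq I_0\) (the boundary term at fine scales being non-negative since \(B\ge 0\)) gives \(\sum_{I\subseteq I_0}\Phi_I\le 2|I_0|B(\fd_{I_0},\Fd_{I_0},\md_{I_0},\Xi_{I_0})\le 2|I_0|\Fd_{I_0} = 2\int_{I_0}\|f\|^2 \le 2\|f\|_{\ltr}^2 \le 8\|f\|_{\ltr}^2\), and monotone convergence removes the truncation to \(I_0\).

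The main obstacle is the concavity inequality, where the asymmetry between \(\widetilde{\md}_I\) (inside \(\Phi_I\)) and \(\Xi_I\) (inside \(B\)) has to be absorbed. Since \(\Fd\) is itself a martingale those contributions cancel, leaving \(B_I - \tfrac12(B_{I^+}+B_{I^-}) = \tfrac12(G_{I^+}+G_{I^-}) - G(\fd_I,N_I)\), where \(G(\fd,N):=\langle N^{-1}\fd,\fd\rangle\). The classical joint convexity of \(G\) on \(\mathbb{C}^d\times\{N>0\}\), a Schur-complement fact, yields \(\tfrac12(G_{I^+}+G_{I^-})\ge G(\fd_I,\bar N_I)\) and reduces the task to the one-variable inequality
\[G(\fd_I,\bar N_I) - G(\fd_I,N_I) \ge \tfrac{1}{2|I|}\Phi_I.\]
A simultaneous change of variables \(\widetilde{\fd} := \bar N_I^{-1/2}\fd_I\) and \(tD := \bar N_I^{-1/2}(t\md_I A_I\md_I/|I|)\bar N_I^{-1/2}\) turns the matrix identity \(\bar N_I^{-1} - N_I^{-1} = \bar N_I^{-1}(N_I-\bar N_I)N_I^{-1}\) into the scalar-like computation \(G(\fd_I,\bar N_I) - G(\fd_I,N_I) = \langle tD(I_d+tD)^{-1}\widetilde{\fd},\widetilde{\fd}\rangle\), while \(\Phi_I/|I| = \langle tD\widetilde{\fd},\widetilde{\fd}\rangle\). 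The Carleson hypothesis \(\Xi_I\le \md_I\) forces \(\md_I A_I\md_I/|I|\le \md_I\le \bar N_I\), hence \(0\le D\le I_d\); combined with \(t\in(0,1]\), the functional calculus inequality \(tD(I_d+tD)^{-1}\ge tD/(1+t)\ge tD/2\) closes the argument.
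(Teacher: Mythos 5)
Your proof is correct and follows essentially the same approach as the paper: the same Bellman function $\Fd - \langle(\md+t\Xi)^{-1}\fd,\fd\rangle$ (the paper just multiplies by $4$), the same split of the concavity inequality into a joint-convexity step for $\langle N^{-1}\fd,\fd\rangle$ plus a one-variable step, and the same reduction to the matrix inequality $E(I_d+E)^{-1}\ge E/2$, i.e.\ $E\ge E^2$ for $0\le E\le I_d$. The only differences are cosmetic: you handle $0<t\le 1$ directly via $(I_d+tD)^{-1}\ge I_d/2$ instead of reducing to $t=1$ by replacing $A_I$ with $tA_I$, and by not inserting the paper's extraneous factor of $4$ in the Bellman function you obtain the sharper constant $2$ in place of $8$.
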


As we have said earlier, this version is not the simple generalization of the usual weighted Carleson Embedding Theorem in \cite{NaTrVo99}. This is due to the extra factor \((I_d + t \widetilde{\md}_I  \langle W \rangle_I^{-1})^{-1}\) that appears (twice) in the left-hand side of the conclusion. However, the constants that appear in the theorem don't depend on the dimension \(d\) or on the weight \(W\). The proof of the result also uses arguments that were previously discussed in Section 4. 

\begin{proof}
Let $t=1$.
We first have to introduce the Bellman function associated to the problem. For \( \fd \in \mathbb{C}^d, \Fd \in \mathbb{R}, \dw \in \mathcal{M}_d(\mathbb{C}), \md \in \mathcal{M}_d(\mathbb{C}) \) satisfying
\begin{equation}\label{domain2}
\langle \dw^{-1} \fd, \fd \rangle_{\mathbb{C}^d} \leq \Fd \qquad \mbox{and} \qquad \md \leq \dw, 
\end{equation}
define the function \(\fb : \mathbb{C}^d \times \mathbb{R} \times \mathcal{M}_d(\mathbb{C}) \times \mathcal{M}_d(\mathbb{C})\) by
\[\fb(\fd, \Fd, \dw, \md):= 4 \big( \Fd - \big \langle (\dw + \md)^{-1} \fd, \fd \big \rangle_{\mathbb{C}^d} \big). \]

The Bellman function \(\fb\) has the following properties:
\begin{enumerate}[(i)]
    \item (Domain) The domain \(\mathfrak{D}:=\mathrm{Dom}\, \fb\) is given by \eqref{domain2}. 
   \item (Range) \(0 \leq \fb(\fd, \Fd, \dw, \md) \leq  4 \Fd \) for all \((\fd, \Fd, \dw, \md) \in \mathfrak{D}.\)
    \item (Concavity condition) Consider all tuples \(A=(\fd, \Fd, \dw, \md), A_+=(\fd_+, \Fd_+, \dw_+, \md_+)\) and \(A_-=(\fd_-, \Fd_-, \dw_-, \md_-)\) in \(\mathfrak{D}\) such that \(\fd=(\fd_+ + \fd_-)/2, \Fd=(\Fd_+ + \Fd_-)/2, \dw=(\dw_+ + \dw_-)/2,\) and \(\md = m + (\md_+ + \md_-)/2 = m + \widetilde{\md},\) where \(m\) is a positive definite matrix. For all such tuples, we have the following concavity condition:
        \[\fb(A) - \frac{\fb(A_+)+\fb(A_-)}{2} \geq \frac{1}{2} \big \langle (\dw + \widetilde{\md})^{-1} m (\dw + \widetilde{\md})^{-1} \fd, \fd \big \rangle.\]
  \end{enumerate}
\par
Let us now explain these properties of the function \(\fb\). 

The inequality \(\|\dw^{-1/2} \fd\|_{\mathbb{C}^d}^2 \leq \Fd \) follows from the Cauchy-Schwarz Inequality. The other inequality in \eqref{domain} is related to the Carleson condition.

Property (ii) follows trivially from the definition of \(\fb\).

To prove the concavity condition, we consider three tuples \(A, A_+, A_- \in \mathfrak{D}\) such that \(\fd=(\fd_+ + \fd_-)/2, \Fd=(\Fd_+ + \Fd_-)/2, \dw=(\dw_+ + \dw_-)/2,\) and \(\md = m + (\md_+ + \md_-)/2 = m + \widetilde{\md}\). Let \(\tilde{A} = (\fd, \Fd, \dw, \widetilde{\md})\). We prove the inequality in (iii) by splitting it into two inequalities. The first one, 
\[\fb(\tilde{A}) - \frac{1}{2} \big( \fb(A_+) + \fb(A_-) \big) \geq 0,\]
follows from the convexity of the first inequality in \eqref{domain} (like in the first part of the proof of Lemma \ref{extdom}). 
The second inequality, 
\[\fb(A) - \fb(\tilde{A})  \geq \frac{1}{2} \langle (\dw + \widetilde{\md})^{-1} m (\dw + \widetilde{\md})^{-1} \fd, \fd \rangle,\]
is obtained by showing that 
\[ (\dw + \widetilde{\md})^{-1} -  (\dw + \widetilde{\md} + m)^{-1} \geq \frac{1}{2} (\dw + \widetilde{\md})^{-1} m (\dw + \widetilde{\md})^{-1}. \]
To see this, notice that the left-hand side of this inequality can be written as
\[(\dw + \widetilde{\md})^{-1/2} \bigg ( I_d - \Big ( I_d + (\dw + \widetilde{\md})^{-1/2} m (\dw + \widetilde{\md})^{-1/2} \Big )^{-1} \bigg ) (\dw + \widetilde{\md})^{-1/2}.\]
If \(E := (\dw + \widetilde{\md})^{-1/2} m (\dw + \widetilde{\md})^{-1/2}\), we have that \(0 < E \leq I_d\), since \(m \leq \dw \leq \dw + \widetilde{\md}\). Then the inequality \(I_d - (I_d + E)^{-1} \geq \frac{1}{2} E\) is equivalent to \(I_d + E - I_d \geq \frac{1}{2} E(I_d + E)\), which can be rewritten as \(E \geq E^2\). This last inequality is clearly true since \(E \leq I_d\). It follows that 
\begin{align*}
(\dw + \widetilde{\md})^{-1} -  (\dw + \widetilde{\md} + m)^{-1}  & = (\dw + \widetilde{\md})^{-1/2} \big(I_d - (I_d + E)^{-1}\big) (\dw + \widetilde{\md})^{-1/2} \\
& \geq \frac{1}{2}  (\dw + \widetilde{\md})^{-1/2} E  (\dw + \widetilde{\md})^{-1/2} \\
& = \frac{1}{2} (\dw + \widetilde{\md})^{-1} m (\dw + \widetilde{\md})^{-1},
\end{align*}
which is the desired inequality. 

To prove Theorem \ref{matrix_CET}, let \(W\) be a matrix weight, \(f \in \ltr\) and \(\{A_I\}_I\) be a sequence of \( d \times d\) positive definite matrices. For any \(I \in \dd\), let
\[\fd_I = \langle W^{1/2} f \rangle_I \in \mathbb{C}^d, \quad \Fd_I = \langle \|f\|^2 \rangle_I \in \mathbb{R}, \]
\[\dw_I = \langle W \rangle_I \in \mathcal{M}_d(\mathbb{C}), \quad \md_I = \frac{1}{|I|} \sum_{J \subseteq I} \langle W \rangle_J A_J \langle W \rangle_J  \in \mathcal{M}_d(\mathbb{C}).\]
Then 
\[m_I = \frac{1}{|I|} \dw_I A_I \dw_I \quad \mbox{and} \quad \widetilde{\md}_I = \frac{1}{|I|} \sum_{J \subsetneq I} \langle W \rangle_J A_J \langle W \rangle_J .\]

For the interval \(I\), the concavity condition (iii) implies that
\begin{align*}
& \frac{|I|}{2} \big \langle (\dw_I + \widetilde{\md}_I)^{-1} m_I (\dw_I + \widetilde{\md}_I)^{-1} \fd, \fd \big \rangle \\
& \qquad \qquad \qquad \leq |I| \fb(\fd_I, \Fd_I, \dw_I, \md_I) - |I^+| \fb(\fd_{I^+}, \Fd_{I^+}, \dw_{I^+}, \md_{I^+}) -  |I^-| \fb(\fd_{I^-}, \Fd_{I^-}, \dw_{I^-}, \md_{I^-}).
\end{align*}
Iterating this inequality \(k\) times, we obtain
\begin{align*}
& \frac{|I|}{2} \sum_{\substack{
                    J \subseteq I\\
                    |J| > 2^{-k}|I| }} \langle (\dw_J+ \widetilde{\md}_J)^{-1} m_J (\dw_J + \widetilde{\md}_J)^{-1} \fd_J, \fd_J \rangle \\
& \qquad \qquad \qquad \leq |I| \fb(\fd_I, \Fd_I, \dw_I, \md_I) - 
\sum_{\substack{
                    J \subseteq I\\
                    |J| = 2^{-k}|I| }}
 |J| \fb(\fd_J, \Fd_J, \dw_J, \md_J) \\
& \qquad \qquad \qquad \leq |I| \fb(\fd_I, \Fd_I, \dw_I, \md_I) \leq  4 |I| \Fd_I.      
\end{align*}
Using that 
\[ \big \langle (\dw_I + \widetilde{\md}_I)^{-1} m_I (\dw_I + \widetilde{\md}_I)^{-1} \fd_I, \fd_I \big \rangle\\
= \frac{1}{|I|}  \big \langle (I_d +  \langle W \rangle_I ^{-1} \widetilde{\md}_I)^{-1} A_I  (I_d +  \langle W \rangle_I^{-1} \widetilde{\md}_I )^{-1} \langle W^{1/2} f \rangle_I, \langle W^{1/2} f \rangle_I \big \rangle,\]
and letting \(k \to \infty\), we get
\[\sum_{J \subseteq I} \big \langle (I_d +  \langle W \rangle_I ^{-1} \widetilde{\md}_J)^{-1} A_J  (I_d +  \widetilde{\md}_J  \langle W \rangle_I^{-1})^{-1} \langle W^{1/2} f \rangle_J, \langle W^{1/2} f \rangle_J \big \rangle_{\mathbb{C}^d} \leq 8 |I| \langle \|f\|^2 \rangle_I,\]
which is our desired conclusion for \(t = 1\). 

For $0 < t < 1$, just replace $A_I$ by $t A_I$ and apply the inequality which we have just proved.

\end{proof}

\begin{remark}
\normalfont
While this paper was prepared for publication, A. Culiuc and S. Treil posted a result which appears to be the correct generalization of the scalar weighted Carleson Embedding Theorem to matrix weights in finite dimension $d$ (see \cite{CuTr15}). 
In the notation
of Theorem \ref{matrix_CET}, it says that 
\[
\sd \big \langle A_I  \langle W^{1/2} f \rangle_I, \langle W^{1/2} f \rangle_I \big \rangle_{\mathbb{C}^d} \leq C(d)  \|f\|^2_{\ltr}\]
if
\[\frac{1}{|I|} \sum_{J \subseteq I} \langle W \rangle_J  A_J \langle W \rangle_J  \leq \langle W \rangle_I  \quad \text{for all }  I \in \dd,\]
(Theorem 1.2 in \cite{CuTr15}). An important step in their proof, the estimate (2.5) in \cite{CuTr15}, is essentially identical with our Theorem \ref{matrix_CET}, obtained with a different proof. 
\end{remark}

\bibliographystyle{plain}

\begin{bibsection}
\begin{biblist}

\bib{Ba91}{article}{
  author = {K. Ball},
  title = {The plank problem for symmetric bodies},
  journal = {Invent. Math.},
  volume = {104},
  year = {1991},
  number = {3},
  pages = {535-543},
  owner = {Andrei},
  timestamp = {2015.08.26}
}

\bib{BiPeWi14}{article}{
  author = {K. Bickel},
  author = {S. Petermichl}, 
  author = {B. Wick},
  title = {Bounds for the {H}ilbert transform with matrix {$A_2$} weights},
  journal = {J. Funct. Anal.},
  year = {2016},
  volume = {270},
  pages = {1719-1743},
  number = {5},
  owner = {Andrei},
  timestamp = {2016.05.02}
}

\bib{CoFe74}{article}{
  author = {R. R. Coifman},
  author = {C. Fefferman},
  title = {Weighted norm inequalities for maximal functions and singular integrals},
  journal = {Studia Math.},
  year = {1974},
  volume = {51},
  pages = {241-250},
  owner = {Andrei},
  timestamp = {2013.02.28}
}

\bib{CuTr15}{article}{
author={A. Culiuc},
author={S. Treil},
title={The Carleson Embedding Theorem with Matrix Weights},
year = {2015},
volume={Preprint, arXiv:1508.01716},
owner = {Andrei},
timestamp = {2015.06.29}

}

\bib{DaDo07}{article}{
  author = {K. R. Davidson},
  author = {A. P. Donsig},
  title = {Norms of {S}chur multipliers},
  journal = {Illinois J. Math.},
  year = {2007},
  volume = {51},
  pages = {743-766},
  number = {3},
  owner = {Andrei},
  timestamp = {2013.03.01}
}

\bib{Go03}{article}{
  author = {M. Goldberg},
  title = {Matrix {$A_p$} weights via maximal functions},
  journal = {Pacific J. Math.},
  year = {2003},
  volume = {211},
  pages = {201-220},
  number = {2},
  owner = {Andrei},
  timestamp = {2015.06.29}
}

\bib{HaHy14}{article}{
  author = {T. S. H{\"a}nninen},
  author = {T. P. Hyt{\"o}nen},
  title = {Operator-valued dyadic shifts and the {$T(1)$} theorem},
  year = {2016},
  volume = {180},
  pages={213 -- 253}
  number={2}
  journal={Monatsh. Math. }
  owner = {Andrei},
  timestamp = {2015.08.21}
  }

\bib{HuMuWh73}{article}{
  author = {R. A. Hunt},
  author = {B. Muckenhoupt}, 
  author = {R. L. Wheeden},
  title = {Weighted norm inequalities for the conjugate function and {H}ilbert
	transform},
  journal = {Trans. Amer. Math. Soc.},
  year = {1973},
  volume = {176},
  pages = {227-251},
  owner = {Andrei},
  timestamp = {2013.02.28}
}

\bib{Hy11}{article}{
  author = {T. P. Hyt{\"o}nen},
  title = {Representation of singular integrals by dyadic operators, and the
	{$A_2$} theorem},
  year = {2011},
  volume = {Preprint, arXiv:1108.5119},
  journal={Lecture notes of an intensive course at Universidad de Sevilla, Summer 2011},
  owner = {Andrei},
  timestamp = {2013.03.01}
}

\bib{Hy12a}{article}{
  author = {T. P. Hyt{\"o}nen},
  title = {The sharp weighted bound for general {C}alder\'{o}n-{Z}ygmund operators},
  journal = {Ann. of Math. (2)},
  year = {2012},
  volume = {175},
  pages = {1473-1506},
  number = {3},
  owner = {Andrei},
  timestamp = {2013.03.01}
}

\bib{hpv}{article}{
 author = {T. Hyt\"onen},
  author = {S. Petermichl},
  author =  {A. Volberg},
  title = {The sharp square function estimate with matrix weight},
  journal = {},
  year = {2017},
  volume = {Preprint,  arXiv:1702.04569},
  pages = {},
  number = {},
  owner = {Andrei},
  timestamp = {2015.06.29}
}

\bib{Is15}{article}{
author={J. Isralowitz},
title={A matrix weighted T$_1$ theorem for matrix kernelled CZOs and a matrix weighted John-Nirenberg theorem},
year={2015},
volume={Preprint, arXiv:1508.02474}
owner = {Andrei},
timestamp = {2015.08.21}

}

\bib{IHP}{article}{
author={J. Isralowitz}, 
author={H.-K. Kwon}, 
author={S.Pott},
title={Matrix-weighted norm inequalities for commutators and paraproducts with matrix symbols},
year={2015},
volume={Preprint, arXiv:1507.04032}
 journal = {to appear in J. London Math. Soc.},
}

\bib{hunt}{article}{
  author = {F. Nazarov},
  author = {S. Treil},
  title = {The hunt for a Bellman function: applications to estimates for singular integral operators and to other classical problems of harmonic analysis},
  journal = {St. Petersburg Math. J.},
  year = {1997},
  volume = {8},
  pages = {721--824},
  number = {5},
  owner = {Andrei},
  timestamp = {}
}

\bib{NaTrVo99}{article}{
  author = {F. Nazarov},
  author = {S. Treil},
  author =  {A. Volberg},
  title = {The {B}ellman functions and two-weight inequalities for {H}aar multipliers},
  journal = {J. Amer. Math. Soc.},
  year = {1999},
  volume = {12},
  pages = {909-928},
  number = {4},
  owner = {Andrei},
  timestamp = {2015.06.29}
}

\bib{ntvp}{article}{
author = {F. Nazarov},
author = {S. Petermichl},
author = {S. Treil},
author =  {A. Volberg},
  title = {Convex body domination and weighted estimates with matrix weights },
  volume = {arXiv:1701.01907},
  pages = {},
  number = {},
  year={2017},
  owner = {Andrei},
  timestamp = {2015.06.29}
}

\bib{Pe07}{article}{
  author = {S. Petermichl},
  title = {The sharp bound for the {H}ilbert transform on weighted {L}ebesgue
	spaces in terms of the classical {$A_p$} characteristic},
  journal = {Amer. J. Math.},
  year = {2007},
  volume = {129},
  pages = {1355-1375},
  number = {5},
  owner = {Andrei},
  timestamp = {2013.02.28}
}

\bib{PePo02}{article}{
  author = {S. Petermichl},
  author = {S. Pott},
  title = {An estimate for weighted {H}ilbert transform via square functions},
  journal = {Trans, Amer. Math. Soc.},
  year = {2002},
  volume = {354},
  pages = {1699-1703},
  number = {4},
  owner = {Andrei},
  timestamp = {2015.08.21}
}

\bib{PeVo02}{article}{
  author = {S. Petermichl},
  author =  {A. Volberg},
  title = {Heating of the {A}hlfors-{B}eurling operator: weakly quasiregular
	maps on the plane are quasiregular},
  journal = {Duke Math. J.},
  year = {2002},
  volume = {112},
  pages = {281-305},
  number = {2},
  owner = {Andrei},
  timestamp = {2013.02.28}
}

\bib{Pi12}{article}{
  author = {G. Pisier},
  title = {Grothendieck's theorem, past and present},
  journal = {Bull. Amer. Math. Soc. (N.S.)},
  year = {2012},
  volume = {49},
  pages = {237-323},
  number = {2},
  owner = {Andrei},
  timestamp = {2013.03.01}
}

\bib{PoSt17}{article}{
author={S. Pott}, 
author={A. Stoica}, 
title={Sharp bounds and \(T1\) theorem for Calder\'{o}n-Zygmund operators with matrix kernel on matrix weighted spaces},
year={2017},
volume={Preprint, arXiv:1705.06105},
 journal = {},
}

\bib{RoVa73}{book}{
  title = {Convex functions},
  publisher = {Academic Press},
  year = {1973},
  author = {A. W. Roberts},
  author = {D. E. Varberg},
  volume = {57},
  series = {Pure and Applied Mathematics},
  address = {New York-London},
  owner = {Andrei},
  timestamp = {2013.03.01}
}

\bib{Tr11}{inproceedings}{
  author = {S. Treil},
  title = {Sharp {$A_2$} estimates of {H}aar shifts via {B}ellman function},
  booktitle ={Recent trends in
Analysis, Theta Ser. Adv. Math.}, 
  pages={187-- 208},
  publisher={ Theta, Bucharest} 
  year = {2013},
  note = {arXiv:1105.2252},
  owner = {Andrei},
  timestamp = {2013.03.01}
}

\bib{TrVo97}{article}{
  author = {S. Treil},
  author = {A. Volberg},
  title = {Wavelets and the {A}ngle between {P}ast and {F}uture},
  journal = {J. Funct. Anal.},
  year = {1997},
  volume = {143},
  pages = {269-308},
  number = {2},
  owner = {Andrei},
  timestamp = {2015.06.29}
}

\bib{Vo97}{article}{
  author = {A. Volberg},
  title = {Matrix {$A_p$} weights via $S$-functions},
  journal = {J. Amer. Math. Soc.},
  year = {1997},
  volume = {10},
  pages = {445-466},
  number = {2},
  owner = {Andrei},
  timestamp = {2015.06.29}
}
  
\bib{Wi00}{article}{
  author = {J. Wittwer},
  title = {A sharp estimate on the norm of the martingale transform},
  journal = {Math. Res. Lett.},
  year = {2000},
  volume = {7},
  pages = {1-12},
  number = {1},
  owner = {Andrei},
  timestamp = {2013.02.28}
}

\end{biblist}
\end{bibsection}

\end{document}